\definecolor{steelblue}{HTML}{A1BDC7}
\definecolor{orange}{HTML}{D98C21}
\definecolor{silver}{HTML}{B0ABA8}
\definecolor{rust}{HTML}{B8420F}
\definecolor{seagreen}{HTML}{2E6B69}
\definecolor{joshua}{HTML}{FBDC7F}
\definecolor{darksky}{HTML}{154c79}
\colorlet{lightsilver}{silver!30!white}
\colorlet{darkorange}{orange!85!black}
\colorlet{darksilver}{silver!85!black}
\colorlet{darksteelblue}{steelblue!85!black}
\colorlet{darkrust}{rust!85!black}
\colorlet{darkseagreen}{seagreen!85!black}
\crefname{hypothesis}{Hypothesis}{Hypotheses}
\title{Optimal Krylov on Average%\thanks{Submitted to the editors DATE.
%\funding{This work was funded by the Fog Research Institute under contract no.~FRI-454.}
}%}
\author{Qi Luo\thanks{Georgia Institute of Technology 
  (\email{qluo48@gatech.edu}).}
\and Florian Schäfer \thanks{Georgia Institute of Technology 
  (\email{fts@gatech.edu}).}
}
\newcommand*{\addFileDependency}[1]{% argument=file name and extension
  \typeout{(#1)}% latexmk will find this if $recorder=0 (however, in that case, it will ignore #1 if it is a .aux or .pdf file etc and it exists! if it doesn't exist, it will appear in the list of dependents regardless)
  \@addtofilelist{#1}% if you want it to appear in \listfiles, not really necessary and latexmk doesn't use this
  \IfFileExists{#1}{}{\typeout{No file #1.}}% latexmk will find this message if #1 doesn't exist (yet)
}
\newcommand*{\myexternaldocument}[1]{%
    \externaldocument{#1}%
    \addFileDependency{#1.tex}%
    \addFileDependency{#1.aux}%
}
\begin{document}

\maketitle

% REQUIRED
\begin{abstract}
  We propose an adaptive randomized truncation estimator for Krylov subspace methods that optimizes the trade-off between the solution variance and the computational cost, while remaining unbiased. The estimator solves a constrained optimization problem to compute the truncation probabilities on the fly, with minimal computational overhead. The problem has a closed-form solution when the improvement of the deterministic algorithm satisfies a diminishing returns property. We prove that obtaining the optimal adaptive truncation distribution is impossible in the general case. Without the diminishing return condition, our estimator provides a suboptimal but still unbiased solution. We present experimental results in GP hyperparameter training and competitive physics-informed neural networks problem to demonstrate the effectiveness of our approach.

\end{abstract}
% REQUIRED
\begin{keywords}
  Iterative method, Krylov subspace, Optimization problem, Randomized algorithm, Russian Roulette estimator 
\end{keywords}

% REQUIRED
\begin{AMS}
  65F10, 68W20, 65B99, 65C05
\end{AMS}

\section{Introduction}

\subsection*{The problem} 
Linear systems frequently appear in the inner loop of dynamical systems arising from the Gauss-Newton method \cite{gauss1877theoria}, competitive gradient descent \cite{schafer2019competitive}, and implicit methods for differential equations \cite{baraff2001physically}. Krylov subspace methods, especially the conjugate gradient algorithm (CG) and minimal residual algorithms, are popular for solving these problems. Achieving floating point accuracy for every linear system solve is too computationally expensive. This necessitates the development of early truncation techniques for these iterative methods. Deterministic early truncation, while reducing computational cost, results in systematic errors in the final output, motivating the search for alternative truncation methods.

\subsection*{Randomized truncation estimators\nopunct} eliminate truncation bias in exchange for variance. Two popular approaches include the Single Sample estimator \cite{10.1214/15-STS523} and the Russian Roulette estimator \cite{P-766}. The latter is especially suitable for iterative methods, because each iteration relies on the information from all previous ones. 
The Russian Roulette estimator truncates the algorithm at random and adjusts the weight of each surviving term by the reciprocal of the survival probability to ensure an unbiased solution. 
Since the estimator remains unbiased regardless of the chosen probability distribution, the focus shifts to selecting a truncation distribution that optimizes the trade-off between variance and computational cost. 

\subsection*{Existing methods}
Recent work aims to improve the sampling technique, particularly for the conjugate gradient algorithm (CG). The Unbiased LInear System SolvEr (ULISSE) \cite{filippone2015enabling} suggests sampling the iteration term from an exponential distribution. 
RR-CG\cite{potapczynski2021bias} justifies this choice, noting that it matches the error decay of CG and maximizes the relative optimization efficiency (ROE) metric \cite{beatson2019efficient}. 
RR-CG also tunes the parameters to improve the trade-off between variance and computational cost. 
However, these existing approaches determine the sampling distribution \emph{a-priori}, in stark contrast to the instance-specific optimality of Krylov subspace methods.
    
% \subsection{Shift of the iterative solver}
% CG is one of the most widely applicable iterative methods. However, it seems unlikely that one can compute the optimal probabilities for the Russian Roulette estimators as one cannot acquire the minimum of the quadratic function $\frac{x^{T}Ax}{2} - b^{T} x$ in a certain Krylov subspace during the iteration. On the other hand, we propose that the conjugate residual method (CR), designed to minimize the residual norm squared (information we can acquire during the iteration), is more suitable for a randomized truncated version. Although CR is less well-known than CG, it actually has the same structure, the same number of matrix-vector products, and similar convergence as CG, which results in few extra cost to be implemented. 

% \subsection{Shift of the iterative solver}
% CG is one of the most widely applicable iterative methods. On the other hand, we propose that the conjugate residual method (CR), designed to minimize the residual norm squared (information we can acquire during the iteration), is more suitable for a randomized truncated version. Although CR is less well-known than CG, it actually has the same structure, the same number of matrix-vector products, and similar convergence as CG, which results in few extra cost to be implemented. 

\subsection*{Our approach: Adaptively Subsampled Conjugate Gradient (AS-CG)}
This work proposes the Adaptively Subsampled estimator (AS estimator), a truncation estimator that preserves an optimal trade-off between variance and computational cost in the stochastic setting. We call its application to the conjugate gradient algorithm AS-CG. In CG, each iteration minimizes the energy norm of the error, with search directions being A-orthogonal. Using this Krylov information, we construct a constrained optimization problem for the expected squared energy norm of the error with respect to the average number of iterations. This problem admits a closed-form solution when the deterministic CG's improvement decreases with increasing iteration number. 
We prove that without this diminishing return condition, obtaining the optimal on-the-fly truncation distribution is impossible. Our estimator still provides an unbiased, albeit suboptimal, solution in this case. 
We also provide a modification of our approach that applies to minimal residual methods.

\subsection*{Summary of contributions}
We introduce the AS estimator, a novel randomized truncation estimator, to overcome the lack of adaptivity and theoretical optimality of existing truncation estimators for CG and minimal residual iterative methods.
We prove that under a diminishing returns assumption, AS-CG achieves the optimal cost-variance trade-off.
We apply our AS estimator to GP hyperparameter optimization and Competitive Physics Informed Neural Network training to demonstrate its effectiveness.
Compared with the existing RR-CG estimator, AS-CG shows a better cost-variance trade-off and improved stability. 

\section{Our Method}

\subsection{Suitable Krylov subspace methods}
We provide background on the conjugate gradient (CG) and two minimal residual methods, the conjugate residual (CR) and the general minimal residual (GMRES) methods, which are suitable for our approach. We denote as $A$ the system matrix, as $x^*$ the true solution, as $x_j$ and $r_j$ the updated solution and the residual before the $j^{th}$ iteration, respectively, and as $\alpha_j$ and $p_j$ the step size and search direction in the $j^{th}$ iteration, respectively. These methods return the true solution $x^{*}$ in at most $N$ iterations, where $N$ is the dimension of the problem.

\subsubsection{Conjugate Gradient}
The conjugate gradient algorithm (CG) is an iterative method to solve symmetric and positive definite linear systems $A^{-1}b$. 
\begin{algorithm}
\caption{Conjugate Gradient}
\label{CG}
\begin{algorithmic}[1]
\STATE{Compute $r_0 := b - Ax_0$, $p_0 := r_0$}
\FOR{$j = 0, 1,\ldots,$ until convergence}
\STATE{$\alpha_j := (r_j, r_j) / (Ap_j, p_j) $}
\STATE{$x_{j+1} := x_j + \alpha_j p_j $}
\STATE{$r_{j+1} := r_j - \alpha_j A p_j $}
\STATE{$\beta_j := (r_{j+1}, r_{j+1}) / (r_j, r_j) $}
\STATE{$p_{j+1} := r_{j+1} + \beta_j p_j $}
\ENDFOR
\end{algorithmic}
\end{algorithm}
\newline
Each iteration minimizes the squared energy norm $\|x-x^*\|_A^2$ of the error. Since the vectors $\{p_j\}_{j = 0, 1, 2, \ldots, N-1}$ are $A$-orthogonal, the squared energy norm of the error before the $j^{th}$ iteration is
\begin{equation}
    \|x_j-x^*\|_A^2 = \left(\sum_{k=j}^{N-1} \alpha_k p_k\right)^{T} A \left(\sum_{k=j}^{N-1} \alpha_k p_k\right) = \sum_{k=j}^{N-1} \alpha_k^2 \|p_k\|_{A}^2.
\end{equation}
An optimal truncation estimator minimizes the variance of the solution for a given average number of iterations. In CG, we can measure the variance as the expected squared energy norm of the error. 
In the stochastic setting, let $\delta_j$ be the random variable that represents the step size in the $j^{th}$ iteration. 
It takes the value of $w_j \alpha_j$ or 0, where $w_j$ is the weight. We require $\mathbb{E}(\delta_j) = \alpha_j$ for an unbiased solution. 
Denoting the probability of running the $j^{th}$ iteration as $\mathbb{Q}(j)$, we have $w_j = \frac{1}{\mathbb{Q}(j)}$ and 
\begin{equation}
\begin{aligned}
\min_{\mathbb{Q}} \quad & \sum_{j = 0}^{N-1}  \mathbb{Q}(j) \left(\alpha_j - \frac{\alpha_j}{\mathbb{Q}(j)}\right)^2 \|p_j\|_{A}^2 + \left(1 - \mathbb{Q}(j)\right) \alpha_j^2 \|p_j\|_A^2  \\
=& \sum_{j = 0}^{N-1} \frac{1-\mathbb{Q}(j)}{\mathbb{Q}(j)}\alpha_j^2 \|p_j\|_A^2.
\end{aligned}
\end{equation}
The optimal solution to this problem is the Single Sample estimator \cite{10.1214/15-STS523}. However, performing a CG iteration inherently depends on the results of all preceding iterations, making it essential to compute them sequentially. Thus, we must ensure $\mathbb{Q}(k) \leq \mathbb{Q}(j)$ for $k>j$. Therefore, we optimize the probabilities $\mathbb{P}(j)$ of truncating before the $j^{th}$ iteration. 
This produces an adaptive Russian Roulette estimator \cite{P-766} that requires only one future CG iteration, resulting in minimal computational overhead, as detailed in \cref{sec: problemform,sec: solvingpro}.

\subsubsection{Generalized Minimal Residual}
The generalized minimal residual algorithm (GMRES) minimizes the residual norm over all vectors in $x_0 + \mathcal{K}_{m}$, where $\mathcal{K}_{m}$ is the $m$-th Krylov subpace. 
We present the most popular of its many variants.
\begin{algorithm}
\caption{GMRES}
\label{GMRES}
\begin{algorithmic}[1]
\STATE{Compute $r_0 := b - Ax_0$, $\beta := \|r_0\|_2$, and $v_1:= r_0/ \beta$}
\STATE{Initialize $V = [v_1]$}
\FOR{$j = 1, 2, \ldots, m$}
    \STATE{Compute $w_j := Av_j$}
    \FOR{$i = 1, 2, \ldots, j$}
        \STATE{$h_{i,j} := (w_j, v_i)$}
        \STATE{$w_j := w_j - h_{i,j} v_i$}
    \ENDFOR
    \STATE{$h_{j+1,j} := \|w_j\|_2$}
    \IF{$h_{j+1,j} = 0$}
        \STATE{set $m:=j$ and go to Line $16$}
    \ENDIF
    \STATE{$v_{j+1} := w_j / h_{j+1,j}$}
    \STATE{Append $v_{j+1}$ to $V$}
\ENDFOR
\STATE{Define the $(m+1) \times m$ Hessenberg matrix $H_m = \{h_{i,j}\}_{1\leq i \leq m+1, 1 \leq j \leq m} $}
\STATE{Find $y_m$ that minimizes $\| \beta e_1 - H_m y_m \|_2$ \COMMENT{Solve least squares problem}}\label{Line:iterate}
\STATE{$x_m := x_0 + V y_m$}
\end{algorithmic}
\end{algorithm}
This algorithm does not return intermediate solutions $x_j$, which are required by our randomized estimator. \cite{saad2003iterative} describes a modification of line $17$ that efficiently computes intermediate solutions on the fly. 
To simplify the exposition in this work, we will use the conjugate residual algorithm, a special case of GMRES.

\subsubsection{Conjugate Residual}
The conjugate residual algorithm (CR) is derived from GMRES applied to Hermitian matrices. 
Its structure resembles that of CG. 
\begin{algorithm}
\caption{Conjugate Residual}
\label{CR}
\begin{algorithmic}[1]
\STATE{Compute $r_0 := b - Ax_0$, $p_0 := r_0$}
\FOR{$j = 0, 1, \ldots,$ until convergence}
\STATE{$\alpha_j := (r_j, Ar_j) / (Ap_j, Ap_j) $}
\STATE{$x_{j+1} := x_j + \alpha_j p_j $}
\STATE{$r_{j+1} := r_j - \alpha_j A p_j $}
\STATE{$\beta_j := (r_{j+1}, Ar_{j+1}) / (r_j, Ar_j) $}
\STATE{$p_{j+1} := r_{j+1} + \beta_j p_j $}
\STATE{Compute $Ap_{j+1} = Ar_{j+1} + \beta_j A p_j$}
\ENDFOR
\end{algorithmic}
\end{algorithm}
\newline
Each iteration minimizes the squared $2$-norm $\|r\|^2_{2}$ of the residual. 
Since the vectors $\{Ap_j\}_{j = 0, 1, 2, \ldots, N-1}$ are orthogonal, the squared residual $2$-norm before the $j^{th}$ iteration is
\begin{equation}
    \|r_{j}\|_{2}^2 = \sum_{k=j}^{N-1} \alpha_k^2 \|A p_k\|_{2}^2.
\end{equation}
The derivation of our truncation estimator follows that for CG, with the squared energy norm of the error replaced by the squared $2$-norm of the residual.

\subsection{The optimization problem} \label{sec: problemform}
% In the stochastic setting, we define $\delta_j$ as the random variable of the $j^{th}$ step size that takes the value of either $w_j \alpha_j$ or 0, where $w_j$ is the weight. We require that $\mathbb{E}(\delta_j) = \alpha_j$ to keep the solution unbiased. For CG, the reweighted energy norm squared of the error before the $j^{th}$ iteration can be expressed as:
% \begin{equation}
% \begin{aligned}   
%     (x^{'}_j - x^*)^{T}A(x^{'}_j - x^*) =  \sum_{k = 0}^{j-1} (\alpha_{k} - w_k \alpha_k)^2 \|p_k\|_{A}^2 + \sum_{k = j}^{\infty} \alpha_k^2 \| p_k\|_{A}^2 \quad j \geq 1 \\
%     (x^{'}_j - x^*)^{T}A(x^{'}_j - x^*) = \sum_{k = 0}^{\infty} \alpha_k^2 \|p_k\|_{A}^2 \quad j = 0
% \end{aligned}
% \end{equation}
%
We denote as $\bar{x}_j$ the adjusted solution before the $j^{th}$ iteration in the stochastic setting. In CG, the adjusted squared energy norm of the error before the $j^{th}$ iteration by Russian Roulette estimator is
\begin{equation}    
    \begin{aligned} 
    \|\bar{x}_j - x^*\|_{A}^2 =  \sum_{k = 0}^{N-1} \alpha_k^2 \|p_k\|_{A}^2, \quad &j = 0 \\
    \|\bar{x}_j - x^*\|_{A}^2 =  \sum_{k = 0}^{j-1} \left(\alpha_{k} -  \frac{\alpha_{k}}{1- \sum_{i = 0}^{k}\mathbb{P}(i)}\right)^2 \|p_k\|_{A}^2 + \sum_{k = j}^{N-1} \alpha_k^2 \| p_k\|_{A}^2, \quad &1 \leq j \leq N-1
    \end{aligned} 
\end{equation}
The adjusted squared energy norm of the error after all $N$ iterations is
\begin{equation}    
    \|\bar{x}_N - x^*\|_{A}^2 = \sum_{k = 0}^{N-1} \left(\alpha_{k} -  \frac{\alpha_{k}}{1- \sum_{i = 0}^{k}\mathbb{P}(i)}\right)^2 \|p_k\|_{A}^2 
\end{equation}
where $\mathbb{P}(j)$ is the probability of truncating before the $j^{th}$ iteration. We define $\mathbb{P}(N)$ as the probability of implementing all $N$ iterations.

We denote as $\bar{r}_j$ the adjusted residual before the $j^{th}$ iteration in the stochastic setting. In CR, the adjusted residual $2$-norm squared before the $j^{th}$ iteration by Russian Roulette estimator is
\begin{equation}    
\begin{aligned}
    \|\bar{r}_j\|_{2}^2 = \sum_{k = 0}^{N-1} \alpha_k^2 \|A p_k\|_{2}^2, \quad &j = 0 \\
    \|\bar{r}_j\|_{2}^2 =  \sum_{k = 0}^{j-1} \left(\alpha_{k} - \frac{\alpha_{k}}{1- \sum_{i = 0}^{k}\mathbb{P}(i)}\right)^2 \|A p_k\|_{2}^2 + \sum_{k = j}^{N-1} \alpha_k^2 \|A p_k\|_{2}^2, \quad &1 \leq j \leq N-1.
\end{aligned} 
\end{equation}
The adjusted residual $2$-norm squared after all $N$ iterations is
\begin{equation}    
    \|\bar{r}_N\|_{2}^2 = \sum_{k = 0}^{N-1} \left(\alpha_{k} - \frac{\alpha_{k}}{1- \sum_{i = 0}^{k}\mathbb{P}(i)}\right)^2 \|A p_k\|_{2}^2. 
\end{equation}
\Cref{tab:notation} introduces notation for treating the cases of CG and CR simultaneously.
\begin{table}[htbp]
{\footnotesize
  \caption{Shared notation for CG and CR}  \label{tab:notation}
\begin{center}
  \begin{tabular}{|c|c|c|} \hline
   \bf Notation & \bf CG & \bf CR \\ \hline
    $\|v_j\|$ & $\|\bar{x}_j - x^*\|_{A}$ & $\|\bar{r}_j\|_{2}$ \\
    $\|q_j\|$ & $\|p_j\|_{A}$ & $\|Ap_j\|_{2}$ \\ \hline
  \end{tabular}
\end{center}
}
\end{table}
For a fixed average number of iterations $C$ such that $0 < C \leq N$, variance minimization amounts to

\begin{equation} \label{eq: mini-prob}
\begin{aligned}
\min_{\mathbb{P}} \quad & \sum_{j = 0}^{N}  \mathbb{P}(j) \|v_{j}\|^2 \\
\textrm{s.t.} \quad & \sum_{j = 0}^{N} j \mathbb{P}(j) = C \\
  & \sum_{j = 0}^{N} \mathbb{P}(j) = 1\\
  & -\mathbb{P}(j) \leq 0, \quad 0 \leq j \leq N,
\end{aligned}
\end{equation}
where $\|v_j\|^2$ satisfies
% \begin{equation}
% \begin{aligned}
%     \|v_j\|^2 = \sum_{k = j}^{N-1} \alpha_k^2 \|q_k\|^2, \quad &j = 0 \\
%     \|v_j\|^2 =  \sum_{k = 0}^{j-1} \left(\alpha_{k} - \frac{\alpha_{k}}{1- \sum_{i = 0}^{k}\mathbb{P}(i)}\right)^2 \|q_k\|^2 + \sum_{k = j}^{N-1} \alpha_k^2 \|q_k\|^2, \quad &1 \leq j \leq N-1 \\
%     \|v_j\|^2 =  \sum_{k = 0}^{j-1} \left(\alpha_{k} - \frac{\alpha_{k}}{1- \sum_{i = 0}^{k}\mathbb{P}(i)}\right)^2 \|q_k\|^2, \quad &j = N
% \end{aligned}
% \end{equation}
\begin{equation} \label{eq: v condition}
    \|v_j\|^2 = \begin{dcases}
        \sum_{k = j}^{N-1} \alpha_k^2 \|q_k\|^2, \quad &j = 0 \\
        \sum_{k = 0}^{j-1} \left(\alpha_{k} - \frac{\alpha_{k}}{1- \sum_{i = 0}^{k}\mathbb{P}(i)}\right)^2 \|q_k\|^2 + \sum_{k = j}^{N-1} \alpha_k^2 \|q_k\|^2, \quad &1 \leq j \leq N-1 \\
        \sum_{k = 0}^{j-1} \left(\alpha_{k} - \frac{\alpha_{k}}{1- \sum_{i = 0}^{k}\mathbb{P}(i)}\right)^2 \|q_k\|^2, \quad &j = N.
    \end{dcases}
\end{equation}
The Lagrangian of the problem is
\begin{equation} 
\mathcal{L}= \sum_{j = 0}^{N}  \mathbb{P}(j) \|v_{j}\|^2 + \lambda\left(C- \sum_{j = 0}^{N} j \mathbb{P}(j)\right) + \mu \left(1- \sum_{j = 0}^{N} \mathbb{P}(j)\right) + \sum_{j=0}^{N}u_j\left(-\mathbb{P}(j)\right),
\end{equation}
and the KKT conditions are: 
\begin{equation}\label{eq: KKT cond}
\begin{aligned}
u_j \left(-\mathbb{P}(j)\right) &= 0, \\
u_j &\geq 0,
\end{aligned}
\end{equation}
where $0 \leq j \leq N$.
% For now, we simplify the problem by relaxing the non-negativity constraint on $\mathbb{P}(j)$.
% %
% \begin{equation}
% \begin{aligned}
% \min_{\mathbb{P}} \quad & \sum_{j = 0}^{\infty}  \mathbb{P}(j) \|v_{j}\|^2 \\
% \textrm{s.t.} \quad & \sum_{j = 0}^{\infty} j \mathbb{P}(j) = C \\
%   & \sum_{j = 0}^{\infty} \mathbb{P}(j) = 1.
% \end{aligned}
% \end{equation}
% %
% The Lagrangian of the problem is
% \begin{equation}
% \mathcal{L(\mathbb{P}, \lambda, \mu)}= \sum_{j = 0}^{\infty}  \mathbb{P}(j) \|v_{j}\|^2 + \lambda(C- \sum_{j = 0}^{\infty} j \mathbb{P}(j)) + \mu (1- \sum_{j = 0}^{\infty} \mathbb{P}(j)).
% \end{equation}
% %

\subsection{Solving the optimization problem} \label{sec: solvingpro} 
To simplify the first-order optimality condition, we define 
\begin{equation}
    m_j := \left(\alpha_{j} - \frac{\alpha_{j}}{1- \sum_{i = 0}^{j}\mathbb{P}(i)}\right)\left(-\frac{\alpha_{j}}{(1- \sum_{i = 0}^{j}\mathbb{P}(i))^2}\right) \|q_{j}\|^2.
\end{equation}
The first-order conditions imply that 
% \begin{equation}
% \begin{aligned}
%     \frac{\partial\mathcal{L}}{\partial \mathbb{P}(j)} = \|v_j\|^2 + 2 \sum_{k=(j+1)}^{N}\mathbb{P}(k)\left(\sum_{i = j}^{k-1} m_i \right) - j \lambda  - \mu -u_j = 0, \quad &0 \leq j \leq N-1 \\
%     \frac{\partial\mathcal{L}}{\partial \mathbb{P}(j)} = \|v_j\|^2 - j \lambda  - \mu -u_j = 0, \quad &j = N.
% \end{aligned}
% \end{equation}
\begin{equation}
    \frac{\partial\mathcal{L}}{\partial \mathbb{P}(j)} = \begin{dcases}
        \|v_j\|^2 + 2 \sum_{k=(j+1)}^{N}\mathbb{P}(k)\left(\sum_{i = j}^{k-1} m_i \right) - j \lambda  - \mu -u_j = 0, \quad &0 \leq j \leq N-1 \\
        \|v_j\|^2 - j \lambda  - \mu -u_j = 0, \quad &j = N.
    \end{dcases}
\end{equation}
Since $\mu$ is freely adjustable, we assume that $\frac{\partial\mathcal{L}}{\partial \mathbb{P}(0)} = 0$.
Satisfying the other first-order conditions requires that
\begin{equation}\label{eq: foc difference}
    \frac{\partial\mathcal{L}}{\partial \mathbb{P}(j)} - \frac{\partial\mathcal{L}}{\partial \mathbb{P}(j+1)} = 0, \quad 0 \leq j \leq N-1,
\end{equation}
which is equivalent to
\begin{equation}\label{eq: Diff}
    \left(\|v_j\|^2 - \|v_{j+1}\|^2\right) + 2\sum_{k=j+1}^{N}\mathbb{P}(k) m_j + \lambda - u_j + u_{j+1} = 0, \quad 0 \leq j \leq N-1.
\end{equation}
We can simplify \eqref{eq: Diff} to
\begin{equation}\label{eq: Diff2}
    \frac{\alpha_{j} \|q_{j}\|}{1-\sum_{k=0}^{j}\mathbb{P}(k)} = \sqrt{-\lambda + u_j - u_{j+1}}, \quad 0 \leq j \leq N-1.
\end{equation}
For a given average number of iterations $C$, there must be an integer $-1 \leq n \leq N-1$ such that
\begin{equation}
        \mathbb{P}(j) = 0, \quad 0 \leq j \leq n,
\end{equation}
and $\mathbb{P}(n+1)$ is the first non-zero probability. If $n = N-1$, $\mathbb{P}(N) = 1$ and the estimator results in the deterministic iterative method. Therefore, we only consider $-1 \leq n \leq N-2$ from then on.

\begin{lemma}
Assume that the Krylov subspace iterative method has a diminishing returns property, i.e., $\alpha_{j-1}^2 \|q_{j-1}\|^2 > \alpha_{j}^2 \|q_{j}\|^2$ for every j. Then, 
\begin{equation}
        \mathbb{P}(j) > 0, \quad n+1 \leq j \leq N.
\end{equation}
\end{lemma}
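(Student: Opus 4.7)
The plan is to argue by contradiction. Suppose some index $j^{*} \in \{n+2, \ldots, N\}$ satisfies $\mathbb{P}(j^{*}) = 0$, and take $j^{*}$ to be the smallest such. Minimality forces $\mathbb{P}(j^{*}-1) > 0$, so by complementary slackness \eqref{eq: KKT cond} we have $u_{j^{*}-1} = 0$. The main tool is the squared form of \eqref{eq: Diff2}, namely $\alpha_j^2 \|q_j\|^2/(1-S_j)^2 = -\lambda + u_j - u_{j+1}$ for $0 \leq j \leq N-1$, where $S_j := \sum_{k=0}^j \mathbb{P}(k)$.

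The first step is to compare this identity at the adjacent indices $j^{*}-1$ and $j^{*}$. Because $\mathbb{P}(j^{*}) = 0$ gives $S_{j^{*}} = S_{j^{*}-1}$, the two denominators coincide, and the diminishing-returns hypothesis $\alpha_{j^{*}-1}^2 \|q_{j^{*}-1}\|^2 > \alpha_{j^{*}}^2 \|q_{j^{*}}\|^2$ then yields the strict second-difference inequality $u_{j^{*}-1} - 2 u_{j^{*}} + u_{j^{*}+1} > 0$. Substituting $u_{j^{*}-1} = 0$ and $u_{j^{*}} \geq 0$ gives $u_{j^{*}+1} > 2 u_{j^{*}} \geq 0$, so $\mathbb{P}(j^{*}+1) = 0$ by complementary slackness.

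Next I would iterate. Setting $d_k := u_{j^{*}+k+1} - u_{j^{*}+k}$ for $k \geq -1$, the same subtraction applied at the pair $(j^{*}+k-1, j^{*}+k)$ is legitimate whenever $\mathbb{P}(j^{*}+k) = 0$ (so the $S$-denominators again coincide) and produces $d_k > d_{k-1}$. Since $d_{-1} = u_{j^{*}} \geq 0$, the sequence $(d_k)$ is strictly increasing, so $d_k > 0$ for all $k \geq 0$; chained with $u_{j^{*}} \geq 0$, this yields $u_{j^{*}+k} > 0$ (and hence $\mathbb{P}(j^{*}+k) = 0$) for every $k = 1, \ldots, N - j^{*}$. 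Thus $\mathbb{P}(j) = 0$ for all $j^{*} \leq j \leq N$, forcing $S_{j^{*}-1} = \sum_{k=0}^N \mathbb{P}(k) = 1$. But then the LHS of \eqref{eq: Diff2} at $j = j^{*}-1$ is infinite while the RHS is finite, a contradiction. The boundary case $j^{*} = N$ follows directly, since $\mathbb{P}(N) = 0$ alone gives $S_{N-1} = 1$.

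The main obstacle is the careful interlocking of complementary slackness with the first-order identity: each inductive step requires $\mathbb{P}(j^{*}+k) = 0$ to be already in place before the equal-denominator subtraction can be carried out, and it is exactly that subtraction which delivers the strict inequality needed to trigger $\mathbb{P}(j^{*}+k+1) = 0$ at the next round. Once the cascade is correctly threaded, the content of the proof reduces to the observation that diminishing returns forces the Lagrange multipliers $u_j$ to be strictly convex in the index, and such a sequence is incompatible with the normalization $\sum_j \mathbb{P}(j) = 1$ as soon as any zero probability is introduced past index $n+1$.
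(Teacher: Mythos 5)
Your proof is correct, and it reaches the conclusion by a genuinely different route than the paper's. The paper argues in one shot: it posits a maximal interior block of zero probabilities $\mathbb{P}(n+m+1)=\dots=\mathbb{P}(n+s)=0$ flanked by positive probabilities, notes that complementary slackness kills $u_{n+m}$ and $u_{n+s+1}$ while the zero block makes the two survival denominators in \eqref{eq: Diff2} coincide, and concludes $\alpha_{n+m}\|q_{n+m}\|\leq\alpha_{n+s}\|q_{n+s}\|$, contradicting diminishing returns directly. You instead propagate forward: a single zero at $j^{*}$ forces, via the equal-denominator subtraction, strict convexity of the multiplier sequence $u_j$ in the index, which cascades $\mathbb{P}(j)=0$ all the way to $j=N$ and then collides with the normalization $\sum_{j}\mathbb{P}(j)=1$ (the survival probability hits zero before iteration $N$, blowing up the left-hand side of \eqref{eq: Diff2} and the objective). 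Both arguments rest on the same two ingredients---equal denominators across zero blocks and the sign constraints $u_j\geq 0$---but your contradiction lands on feasibility rather than on diminishing returns itself. A modest advantage of your version: the paper's contradiction hypothesis presupposes a positive probability \emph{after} the zero block, leaving the case of a trailing block of zeros through $\mathbb{P}(N)$ implicit, whereas your cascade-plus-normalization argument covers that case explicitly (including $j^{*}=N$, which the paper only reaches through the separate $n=N-2$ discussion). The paper's endpoint comparison is shorter when the block is interior; yours is more self-contained at the boundary.
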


\begin{proof}
First, we consider the special case of $n = N-2$. Since the estimator is unbiased, $0<\mathbb{P}(N-1)<1$. Thus, $\mathbb{P}(N) = 1 - \mathbb{P}(N-1) > 0$. 
Given $-1 \leq n < N-2$, we assume that there exist $m$ and $s$ such that $1 \leq m < s < N-n$ and
    \begin{equation}
        \begin{aligned}
            \mathbb{P}(j) > 0, \quad &n+1 \leq j \leq n+m \\
            \mathbb{P}(j) = 0, \quad &n+m+1 \leq j \leq n+s \\
            \mathbb{P}(j) > 0, \quad &j = n+s+1.
        \end{aligned}
    \end{equation}
By the KKT conditions,
    \begin{equation}
        \begin{aligned}
            u_{n+m} &= 0 \\
            u_{n+s+1} &= 0.
        \end{aligned}
    \end{equation}
Therefore,
    \begin{equation}
       \begin{aligned}
            \frac{\alpha_{n+m} \|q_{n+m}\|}{1-\sum_{k=0}^{n+m}\mathbb{P}(k)}
            &= \sqrt{-\lambda + u_{n+m} - u_{n+m+1}} \\
            &\leq \sqrt{-\lambda + u_{n+s} - u_{n+s+1}}\\
            &= \frac{\alpha_{n+s} \|q_{n+s}\|}{1-\sum_{k=0}^{n+s}\mathbb{P}(k)}, 
        \end{aligned} 
    \end{equation}
which violates the diminishing returns property of $\alpha_{n+m}^2 \|q_{n+m}\|^2 > \alpha_{n+s}^2 \|q_{n+s}\|^2$.
Thus, the proof is complete.
\end{proof}
Since $\mathbb{P}(j) >0$ for $n+1 \leq j \leq N$,
\begin{equation}
    \begin{aligned}
        u_j &= 0, \quad n+1 \leq j \leq N \\
        u_j &\geq 0, \quad \text{otherwise}.
    \end{aligned}
\end{equation}
\eqref{eq: Diff2} becomes
\begin{equation} \label{eq: KKT eq}
    \begin{aligned}
    \frac{\alpha_j \|q_j\|}{1-\sum_{k=n+1}^{j}\mathbb{P}(k)} = \sqrt{-\lambda}, \quad &n+1 \leq j \leq N-1 \\
    \alpha_{j} \|q_{j}\| = \sqrt{-\lambda + u_j}, \quad &n \geq 0 \; \text{and} \; j = n\\
    \alpha_j \|q_j\| = \sqrt{-\lambda + u_j - u_{j+1}}, \quad &\text{otherwise}
    \end{aligned}
\end{equation}
For the first positive truncation probability, we have
\begin{equation}
    \mathbb{P}(n+1) = 1 - \frac{\alpha_{n+1}\|q_{n+1}\|}{\sqrt{-\lambda}}.
\end{equation}
By assigning the initial truncation probability $\mathbb{P}(n+1)$, we adjust $\lambda$ implicitly. If $n = -1$, there is no upper bound for the choice of $\mathbb{P}(n+1)$, i.e., $\mathbb{P}(n+1)\in (0,1)$. If $n \geq 0$, to satisfy the KKT conditions that $u_{n} \geq 0$ the maximum value we can choose for $\mathbb{P}(n+1)$ is
\begin{equation}
    \mathbb{P}(n+1) = \frac{\alpha_{n} \|q_{n}\| - \alpha_{n+1} \|q_{n+1}\|}{\alpha_{n} \|q_{n}\|}.
\end{equation}
All consequent truncation probabilities are
% \begin{equation}
% \begin{aligned}
%         \mathbb{P}(j) = \frac{\left(\alpha_{j-1} \|q_{j-1}\| - \alpha_{j} \|q_{j}\| \right) \left(1-\mathbb{P}(n+1) \right)}{\alpha_{n+1} \|q_{n+1}\|} , \quad &n+1 < j \leq N-1 \\
%         \mathbb{P}(j) = \frac{\left(\alpha_{j-1} \|q_{j-1}\| \right) \left(1-\mathbb{P}(n+1) \right)}{\alpha_{n+1} \|q_{n+1}\|} , \quad &j = N.
% \end{aligned}
% \end{equation}

\begin{equation}
        \mathbb{P}(j) = \begin{dcases}
            \frac{\left(\alpha_{j-1} \|q_{j-1}\| - \alpha_{j} \|q_{j}\| \right) \left(1-\mathbb{P}(n+1) \right)}{\alpha_{n+1} \|q_{n+1}\|} , \quad &n+1 < j \leq N-1 \\
            \frac{\left(\alpha_{j-1} \|q_{j-1}\| \right) \left(1-\mathbb{P}(n+1) \right)}{\alpha_{n+1} \|q_{n+1}\|} , \quad &j = N.
        \end{dcases}
\end{equation}
% The average number of iterations $C$ becomes
% \begin{equation}
% n + 1 + \frac{\alpha_{n+1} \|q_{n+1}\|}{\alpha_{n} \|q_{n}\|} (1 + \frac{\sum_{j = n+2}^{N-1}\alpha_{j} \|q_{j}\|}{\alpha_{n+1} \|q_{n+1}\|}) \leq C < n + 2 + \frac{\sum_{j = n+2}^{N-1}\alpha_{j} \|q_{j}\|}{\alpha_{n+1} \|q_{n+1}\|}.
% \end{equation}
Computing the truncation probabilities requires only one future iteration. Without information about all iterations, it is impossible to specify the variance or average number of iterations explicitly. However, we can control the trade-off between output variance and computational cost by enforcing a certain number of deterministic iterations and by choosing the initial truncation probability. 
\begin{definition}\label{def: original AS}
The AS estimator requires a continuous parameter $\eta \in (-1, N-1)$, with $n = \lfloor \eta \rfloor$, and $\sigma = \eta - n$. All truncation probabilities are:
% \begin{equation}
%     \begin{aligned}
%         \mathbb{P}(j) &= 0, \quad 0 \leq j \leq n \\
%         \mathbb{P}(j) &= \begin{dcases}
%             1-\sigma, \quad &n = -1 \; \mathrm{and} \; j = n+1 \\
%             \left(1-\sigma \right) \left(\frac{\alpha_{n} \|q_{n}\| - \alpha_{n+1} \|q_{n+1}\|}{\alpha_{n} \|q_{n}\|} \right), \quad &n \geq 0 \; \mathrm{and} \; j = n+1 
%         \end{dcases}\\
%         \mathbb{P}(j) &= \frac{\left(\alpha_{j-1} \|q_{j-1}\| - \alpha_{j} \|q_{j}\| \right) \left(1-\mathbb{P}(n+1) \right)}{\alpha_{n+1} \|q_{n+1}\|}, \quad n+1 < j \leq N-1 \\
%         \mathbb{P}(j) &= \frac{\left(\alpha_{j-1} \|q_{j-1}\| \right) \left(1-\mathbb{P}(n+1) \right)}{\alpha_{n+1} \|q_{n+1}\|}, \quad j = N.
%     \end{aligned}
% \end{equation}

\begin{equation}
        \mathbb{P}(j) = \begin{dcases}
        0, \quad &0 \leq j \leq n \\
            1-\sigma, \quad &n = -1 \; \mathrm{and} \; j = n+1 \\
            \left(1-\sigma \right) \left(\frac{\alpha_{n} \|q_{n}\| - \alpha_{n+1} \|q_{n+1}\|}{\alpha_{n} \|q_{n}\|} \right), \quad &n \geq 0 \; \mathrm{and} \; j = n+1 \\
         \frac{\left(\alpha_{j-1} \|q_{j-1}\| - \alpha_{j} \|q_{j}\| \right) \left(1-\mathbb{P}(n+1) \right)}{\alpha_{n+1} \|q_{n+1}\|}, \quad &n+1 < j \leq N-1 \\
        \frac{\left(\alpha_{j-1} \|q_{j-1}\| \right) \left(1-\mathbb{P}(n+1) \right)}{\alpha_{n+1} \|q_{n+1}\|}, \quad &j = N.
        \end{dcases}
\end{equation}
The average number of iterations is
\begin{equation}
    C = \begin{dcases}
        \sigma+\sigma \frac{\sum_{j = n+2}^{N-1}\alpha_{j} \|q_{j}\|}{\alpha_{n+1} \|q_{n+1}\|}, \quad &n = -1 \\
        n + 1 + \left(\sigma + \left(1-\sigma \right) \frac{\alpha_{n+1} \|q_{n+1}\|}{\alpha_{n} \|q_{n}\|}\right)\left(1+\frac{\sum_{j = n+2}^{N-1}\alpha_{j} \|q_{j}\|}{\alpha_{n+1} \|q_{n+1}\|}\right), \quad &\text{otherwise}.
    \end{dcases}
\end{equation}
\end{definition}

\begin{theorem}
    Assume that the Krylov subspace iteration method has a diminishing returns property. For a given average number of iterations $C$, the AS estimator returns the minimum solution to \eqref{eq: mini-prob}.
\end{theorem}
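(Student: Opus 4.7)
The plan is to certify the probabilities in \Cref{def: original AS} as a KKT point of \eqref{eq: mini-prob} and then promote stationarity to global optimality via convexity of the problem.

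First I would rewrite the objective in a manifestly convex form. Substituting \eqref{eq: v condition} into $\sum_j \mathbb{P}(j)\|v_j\|^2$ and exchanging the order of summation, with $S_k := \sum_{i=0}^{k}\mathbb{P}(i)$, collapses the objective to
\[
\sum_{j=0}^{N}\mathbb{P}(j)\|v_j\|^2 \;=\; \sum_{k=0}^{N-1}\alpha_k^2\|q_k\|^2 \cdot \frac{S_k}{1-S_k}.
\]
Since $t\mapsto t/(1-t)$ is convex on $[0,1)$ and each $S_k$ is linear in $\mathbb{P}$, the objective is a nonnegative combination of convex-of-affine maps, hence convex; the remaining constraints are affine. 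Consequently the KKT conditions \eqref{eq: KKT cond}--\eqref{eq: KKT eq} derived in \Cref{sec: solvingpro} are both necessary and sufficient for a global minimizer.

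Second I would verify that the AS probabilities satisfy these KKT conditions. By construction they solve \eqref{eq: KKT eq} for all $j\ge n+1$ with $u_j=0$; a short induction on $j$ confirms the companion identity $1-\sum_{k=n+1}^{j}\mathbb{P}(k)=\alpha_j\|q_j\|/\sqrt{-\lambda}$. For $j\le n$, complementary slackness is automatic since $\mathbb{P}(j)=0$, so only the sign of the dual multipliers remains. Telescoping the middle and bottom rows of \eqref{eq: KKT eq} gives
\[
u_j \;=\; \sum_{i=j}^{n}\alpha_i^2\|q_i\|^2 \;+\; (n-j+1)\,\lambda,\qquad 0 \le j \le n,
\]
while the AS choice of $\mathbb{P}(n+1)$ pins $\sqrt{-\lambda}$ inside $(\alpha_{n+1}\|q_{n+1}\|,\; \alpha_n\|q_n\|]$. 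The upper endpoint yields $u_n\ge 0$, and for $j<n$ the diminishing-returns hypothesis gives $\alpha_i^2\|q_i\|^2 \ge \alpha_n^2\|q_n\|^2$ on $i\in[j,n]$, so $\sum_{i=j}^{n}\alpha_i^2\|q_i\|^2 \ge (n-j+1)\alpha_n^2\|q_n\|^2 \ge -(n-j+1)\lambda$, i.e.\ $u_j\ge 0$. Finally, I would check that the parameter $\eta\in(-1,N-1)$ realizes every feasible cost $C$: on each integer sub-interval the formula in \Cref{def: original AS} is affine in $\sigma$, the one-sided limits at integers coincide, the slope is strictly positive by diminishing returns, and $C(\eta)\to 0^{+}$ as $\eta\to -1^{+}$ while $C(\eta)\to N^{-}$ as $\eta\to(N-1)^{-}$, so the intermediate value theorem supplies a matching $\eta$ for the prescribed $C$.

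The main obstacle I anticipate is the boundary index $j=n$: the AS value $\mathbb{P}(n+1)=(1-\sigma)(\alpha_n\|q_n\|-\alpha_{n+1}\|q_{n+1}\|)/(\alpha_n\|q_n\|)$ is precisely the largest choice compatible with $u_n\ge 0$, and tracking the algebra so that the induced $\sqrt{-\lambda}$ lies in the window $(\alpha_{n+1}\|q_{n+1}\|,\;\alpha_n\|q_n\|]$ is where the diminishing-returns hypothesis enters in an essential, non-telescoping way. Once this boundary identity is secure, the rest of the KKT verification and the monotonicity argument for $C(\eta)$ reduce to straightforward bookkeeping.
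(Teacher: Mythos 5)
Your proof is correct, but it takes a genuinely different route from the paper's. The paper argues by existence plus uniqueness: it excludes a neighborhood of $\mathbb{P}(N)=0$ where the objective blows up, invokes compactness and continuity to get existence of a minimizer on the remaining set, and then identifies that minimizer with the AS point as the \emph{only} point satisfying the first-order conditions (relying on the preceding Lemma and the derivation in \cref{sec: solvingpro} for uniqueness). You instead collapse the objective, via the substitution $S_k=\sum_{i=0}^k\mathbb{P}(i)$ and the normalization constraint, to $\sum_{k}\alpha_k^2\|q_k\|^2\,S_k/(1-S_k)$, exposing the program as convex with affine constraints, so that verifying the KKT conditions at the AS point suffices for global optimality with no compactness or uniqueness argument. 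The collapse is valid (I checked the exchange of summation), with one subtlety worth flagging: the identity with the original objective holds only on the slice $\sum_j\mathbb{P}(j)=1$, so the two gradients differ by $\|v_N\|^2$ times the all-ones vector; this is absorbed into the multiplier $\mu$ and the KKT systems coincide, but it should be said. Your approach buys three things the paper leaves implicit: a structural explanation (convexity) for why stationarity pins down the optimum, an explicit verification of dual feasibility $u_j\ge 0$ for $j\le n$ via the telescoped formula $u_j=\sum_{i=j}^{n}\alpha_i^2\|q_i\|^2+(n-j+1)\lambda$ together with the window $\sqrt{-\lambda}\in(\alpha_{n+1}\|q_{n+1}\|,\ \alpha_n\|q_n\|]$, and an explicit surjectivity argument for $\eta\mapsto C$ (continuity across integer breakpoints plus monotonicity), which the paper asserts without proof. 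The paper's route is shorter on the page but leans on the unproved claim that the AS point is the unique stationary point in the truncated feasible set; your convexity reformulation removes that dependence entirely.
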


\begin{proof}
    First, consider the special case of $C = N$, which leads to
    \begin{equation} \label{eq: special point}
        \mathbb{P}(j) = \begin{dcases}
            0, \quad &0 \leq j \leq N-1 \\
            1, \quad &j = N.
        \end{dcases}
    \end{equation}
    The AS estimator returns the same truncation probabilities as in \eqref{eq: special point}, and the objective of the problem becomes $0$ which is the minimum.
    
    For $0 < C < N$, if $\mathbb{P}(N) \rightarrow 0$, $\sum_{j=0}^{N-1}\mathbb{P}(j) \rightarrow 1$ and the objective function $\sum_{j = 0}^{N}  \mathbb{P}(j) \|v_{j}\|^2 \rightarrow \infty$ as indicated in \eqref{eq: v condition}. Thus, we can assume a lower bound on $\mathbb{P}(N)$. For every set of the form $\{\mathbb{P}(j) : \sum_{j = 0}^{N} \mathbb{P}(j) = 1 \; \text{and} \; \mathbb{P}(N) \geq \epsilon\}$ for some $\epsilon > 0$, the set is compact since it is a closed and bounded subset of $\mathbb{R}^N$. As the objective function is continuous on this compact set, it has a minima. For $\epsilon$ small enough, the AS estimator covers all possible values of $0 < C < N$ by adjusting the value of $\eta$ and returns the only point that satisfies the first-order condition. Therefore, the solution returns the minimum given the domain $\{\mathbb{P}(j) : \sum_{j = 0}^{N} \mathbb{P}(j) = 1 \; \text{and} \; \mathbb{P}(N) \geq \epsilon\}$. This minimum is also smaller than any solution generated by truncation probabilities outside of the set because $\mathbb{P}(N) \rightarrow 0$ (and thus, $\epsilon \rightarrow 0$) leads to the objective blowing up. Thus, it is the optimal solution to the problem \eqref{eq: mini-prob}.
\end{proof}

\subsection{Without diminishing returns}
In \cref{sec: solvingpro}, we require that the iterative method satisfies a diminishing returns property, i.e., $\alpha_{j-1}^2 \|q_{j-1}\|^2 > \alpha_{j}^2 \|q_{j}\|^2$ for every j. However, this property does not generally hold for CG or CR. We show that when the diminishing returns property fails, it is impossible to obtain the optimal on-the-fly truncation distribution. To avoid the bias introduced by negative truncation probabilities, we propose a generalized AS estimator. 
\begin{theorem}\label{thm: unpracticality}
Given $-1 \leq n < N-3$, $\mathbb{P}(j) = 0$ for $0 \leq j \leq n$, and $\mathbb{P}(n+1) = c > 0$ for 
\begin{equation}
    c \in \begin{dcases}
        (0, 1), \quad &n = -1 \\
        (0, \frac{\alpha_{n} \|q_{n}\| - \alpha_{n+1} \|q_{n+1}\|}{\alpha_{n} \|q_{n}\|}), \quad &\text{otherwise}
    \end{dcases}
\end{equation}
Suppose that after $m$ additional iterations we have computed $\mathbb{P}(n+2)$ as the components for the optimal point, where $n+m \leq N-2$. There are certain iteration components that cause the computed probabilities to lose their optimality.
%, given that the average number of iterations $C$ falls within $(0, 1 + \frac{\sum_{j = 1}^{\infty}\alpha_{j} \|q_{j}\|}{\alpha_{0} \|q_{0}\|}]$.
\end{theorem}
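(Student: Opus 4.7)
The plan is to show, by an adversarial construction of future iteration data, that fixing $\mathbb{P}(n+2)$ from local information binds the global Lagrange multiplier to a value that is not optimal once the remaining components $\alpha_j \|q_j\|$ for $j > n+m+1$ are revealed. First, I would make the dependence of the committed value on $\lambda$ explicit. Following the KKT derivation of Section~\ref{sec: solvingpro}, fixing $\mathbb{P}(n+1) = c$ and requiring the interior stationarity condition $\alpha_j \|q_j\|/(1-\sum_{k\leq j}\mathbb{P}(k)) = \sqrt{-\lambda}$ at index $n+1$ forces $\sqrt{-\lambda} = \alpha_{n+1}\|q_{n+1}\|/(1-c)$, and the committed probability satisfies $\mathbb{P}(n+2) = (1-c)\bigl(1 - \alpha_{n+2}\|q_{n+2}\|/\alpha_{n+1}\|q_{n+1}\|\bigr)$. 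This expression is the true optimum only if the full KKT system admits a fully-interior solution with this particular $\lambda$.

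Next, I would exhibit a future index $j^{*} > n+m+1$ at which the diminishing returns condition is violated sharply, namely $\alpha_{j^{*}}\|q_{j^{*}}\| > \alpha_{j^{*}-1}\|q_{j^{*}-1}\|$ by enough that the naive formula $\mathbb{P}(j^{*}) = (\alpha_{j^{*}-1}\|q_{j^{*}-1}\| - \alpha_{j^{*}}\|q_{j^{*}}\|)(1-c)/\alpha_{n+1}\|q_{n+1}\|$ yields a negative value. Since probabilities are constrained to be nonnegative in \eqref{eq: mini-prob}, the true minimizer must instead activate complementary slackness somewhere, shutting off some interior $\mathbb{P}(j)$ and changing the active set of indices on which the stationarity equation holds.

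I would then argue that the new active set forces a strictly different multiplier. Removing an interior index from the active set reduces the number of KKT equations that must be balanced against the normalization $\sum_j \mathbb{P}(j) = 1$, forcing a recalibration of $\lambda$. By continuity, I would start from a regime where diminishing returns hold (so the committed value is optimal by the theorem of Section~\ref{sec: solvingpro}) and then increase $\alpha_{j^{*}}\|q_{j^{*}}\|$ past the threshold at which the constraint $\mathbb{P}(j^{*}) \geq 0$ first binds. Past that threshold, the true optimum moves away from the committed value, while the latter is frozen by the local data from the first $m$ additional iterations. This gap is exactly the loss of optimality claimed by the theorem.

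The main obstacle is handling the combinatorial nature of the active set once diminishing returns fails: one must ensure that the active-set switch actually occurs and that the resulting shift in $\lambda$ propagates back to $\mathbb{P}(n+2)$. The cleanest way to discharge this would be to reduce to a three-index instance supported at $n+1$, $n+2$, and $j^{*}$ by taking the remaining iteration components negligibly small, so that the perturbation of $\alpha_{j^{*}}\|q_{j^{*}}\|$ controls a single KKT equation and the suboptimality of the committed $\mathbb{P}(n+2)$ can be read off by direct comparison with the recalibrated multiplier.
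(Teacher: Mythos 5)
Your overall strategy---an adversary argument in which the data seen during the first $m$ additional iterations is consistent with two continuations whose optima disagree at index $n+2$---matches the paper's, which branches on the committed value ($\mathbb{P}(n+2)=0$ versus $\mathbb{P}(n+2)>0$) and supplies a bad continuation for each. But the key technical step is missing, and the mechanism you propose for it is wrong. You claim that once $\alpha_{j^*}\|q_{j^*}\|$ exceeds $\alpha_{j^*-1}\|q_{j^*-1}\|$ by enough that the naive formula gives $\mathbb{P}(j^*)<0$, the resulting active-set switch ``forces a recalibration of $\lambda$'' that propagates back to $\mathbb{P}(n+2)$. It does not: with $\mathbb{P}(n+1)=c>0$ and $\mathbb{P}(n+2)>0$ the multiplier is pinned by $\sqrt{-\lambda}=\alpha_{n+1}\|q_{n+1}\|/(1-c)$ independently of anything downstream, and a mild violation at a far-future $j^*$ is absorbed \emph{locally}---the optimum merges $j^*$ with its neighbours (exactly the grouping in \cref{def: AS estimator}), leaving $\lambda$ and $\mathbb{P}(n+2)$ untouched. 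So ``the naive formula goes negative at $j^*$'' is not a sufficient condition for the committed $\mathbb{P}(n+2)$ to lose optimality.

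What is actually needed, and what the paper's Case 2 proves, is a quantitative condition on the whole block between $n+2$ and the violating index: if $\sum_{j=n+2}^{n+m+2}\alpha_j^2\|q_j\|^2 > (m+1)\,\alpha_{n+1}^2\|q_{n+1}\|^2$, then assuming $\mathbb{P}(n+2)>0$ and telescoping the stationarity conditions \eqref{eq: Diff2} over $j=n+2,\dots,n+m+2$ forces $u_{n+m+3}<0$, contradicting \eqref{eq: KKT cond}; hence the true optimum has $\mathbb{P}(n+2)=0$ under that continuation, while the diminishing-returns continuation forces $\mathbb{P}(n+2)>0$. Your three-index reduction could be pushed to satisfy this block-average condition by making the spike large enough, but as written you never identify the threshold nor derive the contradiction, so the claim that the perturbation ``propagates back to $\mathbb{P}(n+2)$'' is asserted rather than proved. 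You should also make explicit that the adversary chooses the continuation \emph{after} seeing the committed value: your continuity paragraph implies this, but your first two paragraphs fix the committed value to the interior formula, which does not cover an algorithm that commits to $\mathbb{P}(n+2)=0$.
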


\begin{proof}
    Fixed $\epsilon > 0$, construct an iteration update
    \begin{equation} \label{eq: severalcon}
    \begin{aligned}
            \alpha_{n}^2 \|q_{n}\|^2 &> \alpha_{n+1}^2 \|q_{n+1}\|^2 \\
            \alpha_{n+2}^2 \|q_{n+2}\|^2 + \epsilon &> \alpha_{n+1}^2 \|q_{n+1}\|^2 > \alpha_{n+2}^2 \|q_{n+2}\|^2 \\
        \alpha_{j+1}^2 \|q_{j+1}\|^2 &= \alpha_{j}^2 \|q_{j}\|^2, \quad m \geq 2 \; \text{and} \; n+2 \leq j < n + m +1 \\
        \sum_{j=n+m+2}^{N-1}\alpha_j^2 \|q_j\|^2 &> \alpha_{n+1}^2 \|q_{n+1}\|^2 + m \epsilon.
    \end{aligned}
    \end{equation}
    % %
    % Suppose the computed probabilities are
    
    % \begin{equation} 
    % \begin{aligned}
    %     \mathbb{P}(n+1) = (1-\sigma)(1 - \frac{\alpha_{n+1} \|q_{n+1}\|}{\alpha_{n} \|q_{n}\|}), \quad \sigma \in [0, 1) \\
    %     \mathbb{P}(n+2) = d.
    % \end{aligned}
    % \end{equation}
    % %
    \begin{itemize}
        \item{Case 1: $\mathbb{P}(n+2) = 0$} \\
        The diminishing returns property holds from the $(n+1)^{th}$ iteration to the $(n+m + 1)^{th}$ iteration. We can generate future iteration updates by ensuring that this property holds throughout the iteration, i.e., $\alpha_{j-1}^2 \|q_{j-1}\|^2 > \alpha_{j}^2 \|q_{j}\|^2$ for every $j$. The result returned by the AS estimator is the optimal point as established in \cref{def: original AS}. Thus, the value of $\mathbb{P}(n+2)$ must be
        \begin{equation}
            \mathbb{P}(n+2) = \frac{\left(\alpha_{n+1} \|q_{n+1}\| - \alpha_{n+2} \|q_{n+2}\|\right) \left(1-\mathbb{P}(n+1)\right)}{\alpha_{n+1} \|q_{n+1}\|} > 0.
        \end{equation}
        $\mathbb{P}(n+2) = 0$ is not optimal in this case.\\

        \item{Case 2: $\mathbb{P}(n+2) > 0$} \\     
         We can create the iteration update in the $(n+m+2)^{th}$ iteration as
         \begin{equation}\label{eq: lastcon}
        \begin{aligned}
        \alpha_{n+m+2}^2 \|q_{n+m+2}\|^2 = \alpha_{n+1}^2 \|q_{n+1}\|^2 + m\epsilon.
        \end{aligned}
        \end{equation}
        By \eqref{eq: severalcon} and \eqref{eq: lastcon}, 
        \begin{equation}
            \left(m+1\right)\alpha_{n+1}^2 \|q_{n+1}\|^2 < \sum_{j=n+2}^{n+m+2}\alpha_j^2 \|q_j\|^2.
        \end{equation}
        Since $\mathbb{P}(n+1) > 0$ and $\mathbb{P}(n+2) > 0$,  $u_{n+1} = 0$ and $u_{n+2} = 0$ based on KKT conditions.
        By \eqref{eq: Diff2},
        \begin{equation}
        \begin{aligned}
            \left(m+1\right)(-\lambda)
            &= \left(m+1\right)\left(-\lambda + u_{n+1} -u_{n+2}\right)\\
            &=\left(m+1\right) \frac{\alpha_{n+1}^2 \|q_{n+1}\|^2}{\left(1-\sum_{k=0}^{n+1}\mathbb{P}(k)\right)^2}\\ 
            &< \sum_{j=n+2}^{n+m+2} \frac{\alpha_j^2 \|q_j\|^2}{\left(1-\sum_{k=0}^{n+m+2}\mathbb{P}(k)\right)^2}\\
            &= \sum_{j=n+2}^{n+m+2} \left(-\lambda+u_j-u_{j+1}\right)\\
            &= \left(m+1\right)\left(-\lambda \right) - u_{n+m+3}.
        \end{aligned} 
        \end{equation} 
        The result contradicts the condition in \eqref{eq: KKT cond}, which requires all multipliers to be non-negative. Thus, $\mathbb{P}(n+2) > 0$ is not optimal in this case.
    \end{itemize} 
    
    Thus, the proof is complete.   
\end{proof}
    As demonstrated in \cref{thm: unpracticality}, the AS estimator may be biased absent the diminishing returns property. To address this issue, we propose the generalized AS estimator, introduced in \cref{def: AS estimator}, which returns the optimal truncation estimator under the diminishing returns property. When the condition fails, it offers a suboptimal yet unbiased solution, ensuring robustness across different cases.

%%\textcolor{red}{Adding transition: the theoretical optimal way may not be adaptive and practical in use, we can add assumption for the iteration update and design a more feasible algorithm such that it returns optimal variance-speed trade-off for a wide range of systems and when it does not give optimality it still returns a near-optimal and unbiased solution}

\subsection{Generalized AS estimator}

\begin{definition} \label{def: AS estimator}
    The AS estimator requires a parameter $\eta \in (-1, N-1)$, with $n = \lfloor \eta \rfloor$, and $\sigma = \eta - n$. If $n \geq 0$, we assume that $\alpha_{n}^2 \|q_{n}\|^2 > \alpha_{n+1}^2 \|q_{n+1}\|^2$. If the assumption fails, we set $\mathbb{P}(n+1) = 0$. All truncation probabilities before the $(n+1)^{th}$ iteration are
    \begin{equation}
        \mathbb{P}(j) = \begin{dcases}
            0, \quad  &0 \leq j \leq n \\
            1-\sigma, \quad &n = -1 \; \mathrm{and} \; j = n+1 \\
            \max\{0, \left(1-\sigma \right)\left(\frac{\alpha_{n} \|q_{n}\| - \alpha_{n+1} \|q_{n+1}\|}{\alpha_{n} \|q_{n}\|}\right)\}, \quad &n \geq 0 \; \mathrm{and} \; j = n+1.
            \end{dcases}
    \end{equation}
    The AS estimator groups the iteration terms into different sets: 
    \begin{equation}
        \begin{aligned}
            S_0 &= \{n+1\} \\
            S_1 &= \{n+2, \ldots, n+2+n_1 \},  \\
            S_2 &= \{n+3+n_1, \ldots, n+3+n_1+n_2 \} \\
            &\vdots \\
            S_m &= \{n+m+1+n_1+n_2+, \ldots, + n_{m-1}, \ldots, n+m+1+n_1+n_2+, \ldots, + n_m\}
        \end{aligned} 
    \end{equation} 
    where each $n_k \geq 0$ and each $n_k$ is the smallest value that makes
    \begin{equation}
        \frac{\sum_{i\in S_{k-1}} \alpha_{i}^2 \|q_{i}\|^2}{|S_{k-1}|} \geq \frac{\sum_{i \in S_{k}} \alpha_{i}^2 \|q_{i}\|^2}{|S_{k}|}, \quad k \geq 1
    \end{equation} 
    valid. The last set $S_m$ also satisfies that
    \begin{equation}
    \begin{aligned}
        n+m+1+n_1+n_2+, \ldots, + n_m &\leq N-1 \\
        n+m+1+n_1+n_2+, \ldots, + n_{m-1} &\geq N-1
    \end{aligned}
    \end{equation}
    Note that $(n+m+1+n_1+n_2+, \ldots, + n_m)$ may not always be equal to $(N-1)$. In case $(n+m+1+n_1+n_2+, \ldots, + n_m) > N-1$, we assume that there are extra dummy iterations which do not improve the solution, i.e., 
    \begin{equation}
        \alpha_j^2\|q_j\|^2 = 0, \quad N \leq j \leq n+m+1+n_1+n_2+, \ldots, + n_m
    \end{equation} 
    For a fixed $S_k$, define $g_k := \frac{\sum_{i \in S_{k}} \alpha_{i}^2 \|q_{i}\|^2}{|S_{k}|}$. The AS estimator returns the truncation probabilities after the $(n+1)^{th}$ iteration
    \begin{equation} \label{eq: P(j) AS-CG}
            \mathbb{P}(j) = \begin{dcases}
                     \left(1-\mathbb{P}(n+1)\right) \frac{\sqrt{g_{k-1}} - \sqrt{g_k}}{\alpha_{n+1} \|q_{n+1}\|}, \quad &j = \min_{i \in S_k} i \\
                     \left(1-\mathbb{P}(n+1)\right) \frac{\sqrt{g_m}}{\alpha_{n+1} \|q_{n+1}\|}, \quad & j = N \\
                    0, \quad & \mathrm{otherwise}.
                 \end{dcases}
    \end{equation} 
\end{definition}
%
% The assumption we make for the iteration updates is as follows: 
% \begin{equation}\label{eq: assump}
% \begin{aligned}
%     \forall j \geq 0, \; \forall 0 \leq l \leq j , \; \forall m > j+1 \\ \sum_{k=l}^{j} \frac{\alpha_k^2 \|q_k\|^2}{j-l+1} > \alpha_{j+1}^2 \|q_{j+1}\|^2 \implies \sum_{k=l}^{j} \frac{\alpha_k^2 \|q_k\|^2}{j-l+1} > \sum_{k=j+1}^{m}\frac{\alpha_{k}^2 \|q_{k}\|^2}{m-j}  
% \end{aligned}  
% \end{equation}
We group iteration terms into different sets, ensuring diminishing returns at the set level. We assign the same survival probability for all iteration terms in each set. \cref{algo: Truncation Probabilities Solver} outlines the formal solving procedure.
\begin{algorithm}
\caption{AS Estimator($\eta$)}
\label{algo: Truncation Probabilities Solver}
\begin{algorithmic}[1]
\STATE{Define $\mathbb{P}:= \{(0), (1), (2), \ldots, (N) $\}}
\STATE{Define $n:= \lfloor \eta \rfloor$, $\sigma := \eta - n$}
\IF{$n = -1$}
    \STATE{$\mathbb{P}(n+1) = 1 - \sigma$}
\ELSE
    \FOR{$j = 0 : n$}
        \STATE{$\mathbb{P}(j) := 0$}
    \ENDFOR
    \STATE{$\mathbb{P}(n+1) = \max\{0, \left(1-\sigma \right)\left(\frac{\alpha_{n} \|q_{n}\| - \alpha_{n+1} \|q_{n+1}\|}{\alpha_{n} \|q_{n}\|}\right)\}$}
    \IF{$n = N-2$}
        \STATE{$\mathbb{P}(N) = 1- \mathbb{P}(n+1)$}
    \ENDIF
\ENDIF
\STATE{Define $g_{\mathrm{prev}}:= \alpha_{n+1}^2 \|q_{n+1}\|^2$; $g_{\mathrm{curr}}:= \alpha_{n+2}^2 \|q_{n+2}\|^2$; $\text{count} := 1$ $i := n+3$}
\WHILE{$i - \text{count} < N$}
\IF{$g_{\mathrm{prev}} < g_{\mathrm{curr}}$}
    \IF{$i \geq N$}
        \STATE{Update $g_{\mathrm{curr}}:= \left(g_{\mathrm{curr}} * \text{count}\right)/ \left(\text{count} + 1\right)$}
    \ELSE
        \STATE{Update $g_{\mathrm{curr}}:= \left(g_{\mathrm{curr}} * \text{count} + \alpha_{i}^2 \|q_{i}\|^2\right)/ \left(\text{count} + 1\right)$}
    \ENDIF
    \STATE{\text{count} = \text{count} + 1}
    \STATE{$i = i + 1$}
\ELSE
    \STATE{$\mathbb{P}(i - \text{count}) := \frac{\sqrt{g_{\mathrm{prev}}} - \sqrt{g_{\mathrm{curr}}}}{\alpha_{n+1} \|q_{n+1}\|} * \left(1 - \mathbb{P}(n+1)\right)$}
    \IF{$\text{count} > 1$}
        \FOR{$k = i-\text{count} + 1 : \max\{i - 1, N-1\}$}
            \STATE{$\mathbb{P}(k) := 0$}
        \ENDFOR
    \ENDIF
    \STATE{$g_{\mathrm{prev}} := g_{\mathrm{curr}}$;  
    $i = i + 1$;
    $\text{count} = 1$}
    \IF{$i < N$}
        \STATE{$g_{\mathrm{curr}}:= \alpha_{i}^2 \|q_{i}\|^2$}
    \ENDIF
\ENDIF
\ENDWHILE
\STATE{$\mathbb{P}(N) = \frac{\sqrt{g_{\mathrm{curr}}}}{\alpha_{n+1} \|q_{n+1}\|} * \left(1 - \mathbb{P}(n+1)\right)$}
\RETURN {$\mathbb{P}$}
\end{algorithmic}
\end{algorithm}

% We provide a sample iteration update to illustrate the procedure. 

% Assume that $\eta = 0.0$. Then $\mathbb{P}(0) = 0$ and $\mathbb{P}(1) = \frac{\alpha_{0} \|q_{0}\| - \alpha_{1} \|q_{1}\|}{\alpha_{0} \|q_{0}\|}$.

% \begin{itemize}
% \item We group the iteration term $1$ into a set: $S_0 = \{1\}$. 
% \item Suppose $\alpha_1^2 \|q_1\|^2 > \alpha_2^2 \|q_2\|^2$
%     \begin{itemize}
%         \footnotesize
%         \item $S_1 = \{2\}$
%         \item $\mathbb{P}(2) = \left(1-\mathbb{P}(1)\right)\frac{\alpha_1 \|q_1\| - \alpha_2 \|q_2\|}{\alpha_1 \|q_1\|}$
%     \end{itemize}
% \item Suppose $\alpha_2^2 \|q_2\|^2 \leq \alpha_3^2 \|q_3\|^2$
% \item Suppose $\alpha_2^2 \|q_2\|^2 > \left(\alpha_3^2 \|q_3\|^2 + \alpha_4^2 \|q_4\|^2\right) / 2$
%     \begin{itemize}
%         \footnotesize
%         \item $S_2 = \{3, 4\}$
%         \item $\mathbb{P}(3) = \left(1-\mathbb{P}(1)\right)\frac{\alpha_2 \|q_2\| - \sqrt{\left(\alpha_3^2 \|q_3\|^2 + \alpha_4^2 \|q_4\|^2\right) /2}}{\alpha_1 \|q_1\|}$
%         \item $\mathbb{P}(4) = 0$
%     \end{itemize}
% \item Suppose $\left(\alpha_3^2 \|q_3\|^2 + \alpha_4^2 \|q_4\|^2\right) / 2 > \alpha_5^2 \|q_5\|^2$
%     \begin{itemize}
%         \footnotesize
%         \item $S_4 = \{5\}$
%         \item $\mathbb{P}(5) = \frac{\sqrt{\left(\alpha_3^2 \|q_3\|^2 + \alpha_4^2 \|q_4\|^2\right) /2} - \alpha_5 \|q_5\|}{\alpha_1 \|q_1\|}$
%     \end{itemize}
% \end{itemize}
To numerically solve complex systems of equations, the Krylov iterative method does not reach the exact solution after $N$ iterations due to the floating-point rounding error. We modify the AS estimator for implementation so as to continuously group iteration terms into different sets beyond the first $N$ iterations. In \cref{def: AS estimator} for any $S_k$ such that $k \geq 1$, the AS estimator requires all the iteration information in $S_k$ to compute $\mathbb{P}(j)$, where $j = \min_{i \in S_k} i$. In practice, we can reduce the solution variance while keeping the computational cost fixed by assigning the calculated probability to $\mathbb{P}(j)$, where $j = \max_{i \in S_k}i$. All the truncation probabilities after the $(n+1)^{th}$ iteration are
\begin{equation} \label{eq: modified P(j) AS-CG}
            \mathbb{P}(j) = \begin{dcases}
                     \left(1-\mathbb{P}(n+1)\right) \frac{\sqrt{g_{k-1}} - \sqrt{g_k}}{\alpha_{n+1} \|q_{n+1}\|}, \quad &j = \max_{i \in S_k}i \\
                     0, \quad &\text{otherwise}.
                 \end{dcases}
\end{equation}
\cref{algo: Practical implementation} shows the practical implementation. 
\begin{algorithm}
\caption{AS Estimator for Practical Implementation($\eta$)}
\label{algo: Practical implementation}
\begin{algorithmic}[1]
\STATE{Define $\mathbb{P}:= \{(0), (1), (2), \ldots, (\infty) $\}}
\STATE{Define $n:= \lfloor \eta \rfloor$, $\sigma := \eta - n$}
\IF{$n = -1$}
    \STATE{$\mathbb{P}(n+1) = 1 - \sigma$}
\ELSE
    \FOR{$j = 0 : n$}
        \STATE{$\mathbb{P}(j) := 0$}
    \ENDFOR
    \STATE{$\mathbb{P}(n+1) = \max\{0, \left(1-\sigma \right)\left(\frac{\alpha_{n} \|q_{n}\| - \alpha_{n+1} \|q_{n+1}\|}{\alpha_{n} \|q_{n}\|}\right)\}$}
\ENDIF
\STATE{Define $g_{\mathrm{prev}}:= \alpha_{n+1}^2 \|q_{n+1}\|^2$; $g_{\mathrm{curr}}:= \alpha_{n+2}^2 \|q_{n+2}\|^2$; $\text{count} = 1$}
\FOR{$i = n+3 : \infty$}

\IF{$g_{\mathrm{prev}} < g_{\mathrm{curr}}$}
    \STATE{Update $g_{\mathrm{curr}}:= \left(g_{\mathrm{curr}} * \text{count} + \alpha_{i}^2 \|q_{i}\|^2\right)/ \left(\text{count} + 1\right)$}
    \STATE{\text{count} = \text{count} + 1}
\ELSE
    \IF{$\text{count} > 1$}
        \FOR{$k = i-\text{count} : i - 2$}
            \STATE{$\mathbb{P}(k) := 0$}
        \ENDFOR
    \ENDIF
    \STATE{$\mathbb{P}(i - 1) := \frac{\sqrt{g_{\mathrm{prev}}} - \sqrt{g_{\mathrm{curr}}}}{\alpha_{n+1} \|q_{n+1}\|} * \left(1 - \mathbb{P}(n+1)\right)$}
    \STATE{$g_{\mathrm{prev}} := g_{\mathrm{curr}}$; $g_{\mathrm{curr}}:= \alpha_{i}^2 \|q_{i}\|^2$; $\text{count} = 1$}
\ENDIF
\ENDFOR
\RETURN {$\mathbb{P}$}
\end{algorithmic}
\end{algorithm}

\section{Experimental Results}
We show that our AS-CG estimator improves the trade-off between the solution variance and computational cost compared to the existing RR-CG method. 
We also compare models optimized with AS-CG against those with RR-CG and deterministic iterative methods across GP hyperparameter training and competitive physics-informed neural networks problem.

\subsection{Speed-Variance Trade-off Performance}
We conduct an experiment to compare the speed-variance trade-off between AS-CG and RR-CG in solving a single linear system. The matrix size is set to $500$, generated as a combination of a sparse random matrix with a density of $0.16$. Its non-zero entries follow a Gaussian distribution. To ensure the matrix is symmetric positive definite, we fill the diagonal with a constant value of $10.0$ and multiply the matrix by its transpose. The linear system converges after $284$ iterations with a tolerance of $1 \times 10^{-8}$ for the relative residual norm. For both AS-CG and RR-CG, we perform $300,000$ trials using a fixed set of parameters to solve the system. \cref{fig:same} shows the result of the experiment.

% \begin{figure}[H]
%     \centering
% 	\includegraphics[width=0.45\textwidth]{csv_for_plot/temporary plot/same2.png}
% 	\caption[Single Linear System]{Single Linear System: the variance with respect to the average number of CG iterations. The temperature parameter $\lambda$ is fixed for RR-CG in each line. For both methods, We vary the minimal truncation number and the initial truncation probability to change the variance and the average number of CG iterations.}
% 	\label{fig:same}
% \end{figure}

From \cref{fig:same}, we observe that the AS-CG estimator achieves a more favorable speed-variance trade-off than the RR-CG estimator. Although RR-CG shows improved performance as the temperature parameter $\lambda$ decreases from $0.10$ to $0.05$, it deteriorates when further reduced to $0.02$. The result suggests that, within a fixed computational budget, AS-CG provides a more reliable estimator to produce solutions with lower variance.

We extend the problem to multiple linear systems setting by introducing two distinct matrices. To create a more challenging linear system, we decrease the matrix diagonal to $8.0$, resulting in a system that requires $636$ iterations to converge. We construct an easier system by increasing the matrix diagonal to $13.0$, reducing the number of iterations to $91$ for convergence. We perform $300,000$ test for each of the three linear systems. \cref{fig:diff} shows the result of the experiment.

\cref{fig:diff} shows that AS-CG demonstrates a greater advantage in the multiple linear systems setting. In the harder linear system, where the CG error reduction deviates significantly from exponential decay, the RR-CG estimators with $\lambda = 0.10$ and $\lambda = 0.05$ exhibit oscillatory behavior. This suggests that increasing the computational cost may lead to higher variance. 

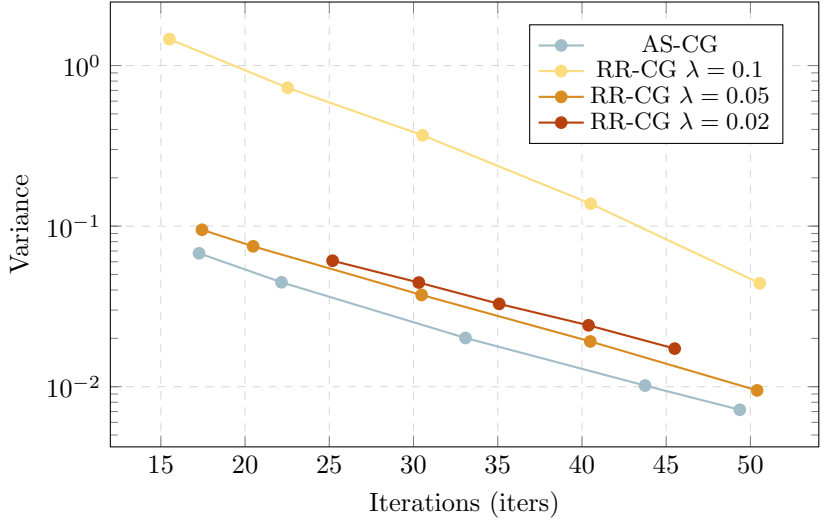
\begin{figure}
    \centering
    \begin{tikzpicture}
    \begin{axis}[
        width=11cm, height=7.5cm, % Adjust size
        xlabel={Iterations (iters)},
        ylabel={Variance},
        ymode=log, % Set y-axis to log scale
        grid=major,
        grid style={dashed, gray!30},
        legend style={at={(0.95,0.95)},anchor=north east, , font=\small, row sep=-2pt},
    ]

    % Add the plot
    \addplot[
        mark=*,
        color=steelblue,
        thick, % Line thickness
    ] table[x=iters, y=variance, col sep=comma] {csv_for_plot/test_acg_same_latest.csv};
    \addlegendentry{AS-CG}

    \addplot[
        mark=*,
        color=joshua,
        thick, % Line thickness
    ] table[x=iters, y=variance, col sep=comma] {csv_for_plot/test_rrcg_same_latest_01.csv};
    \addlegendentry{RR-CG $\lambda = 0.1$}

    \addplot[
        mark=*,
        color=orange,
        thick, % Line thickness
    ] table[x=iters, y=variance, col sep=comma] {csv_for_plot/test_rrcg_same_latest_005.csv};
    \addlegendentry{RR-CG $\lambda = 0.05$}

    \addplot[
        mark=*,
        color=rust,
        thick, % Line thickness
    ] table[x=iters, y=variance, col sep=comma] {csv_for_plot/test_rrcg_same_latest_002.csv};
    \addlegendentry{RR-CG $\lambda = 0.02$}

    \end{axis}
\end{tikzpicture}
    \caption[Single Linear System]{Single Linear System: the variance with respect to the average number of CG iterations. The temperature parameter $\lambda$ is fixed for RR-CG in each line. For both methods, We vary the minimal truncation number and the initial truncation probability to change the variance and the average number of CG iterations.}
    \label{fig:same}
\end{figure}

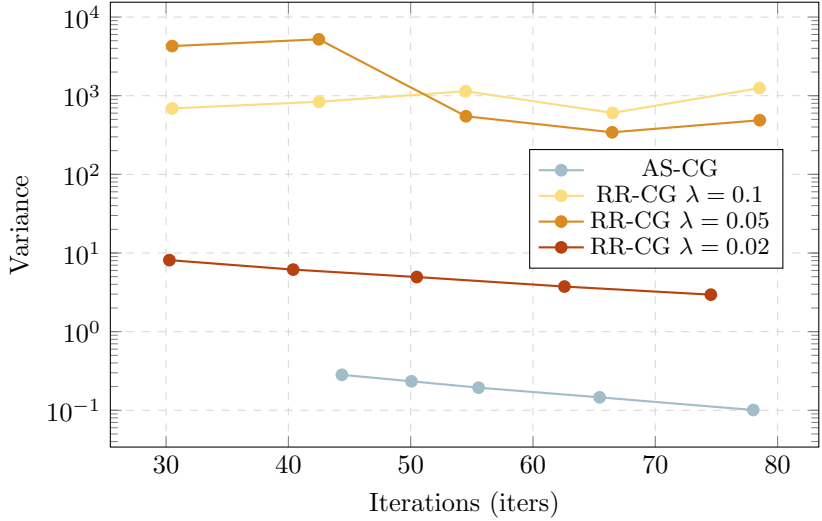
\begin{figure}
    \centering
    \begin{tikzpicture}
    \begin{axis}[
        width=11cm, height=7.5cm, % Adjust size
        xlabel={Iterations (iters)},
        ylabel={Variance},
        ymode=log, % Set y-axis to log scale
        grid=major,
        grid style={dashed, gray!30},
        legend style={at={(0.95,0.67)},anchor=north east, font=\small, row sep=-2pt},
    ]

    % Add the plot
    \addplot[
        mark=*,
        color=steelblue,
        thick, % Line thickness
    ] table[x=iters, y=variance, col sep=comma] {csv_for_plot/test_acg_diff_latest.csv};
    \addlegendentry{AS-CG}

    \addplot[
        mark=*,
        color=joshua,
        thick, % Line thickness
    ] table[x=iters, y=variance, col sep=comma] {csv_for_plot/test_rrcg_diff_latest_01.csv};
    \addlegendentry{RR-CG $\lambda = 0.1$}

    \addplot[
        mark=*,
        color=orange,
        thick, % Line thickness
    ] table[x=iters, y=variance, col sep=comma] {csv_for_plot/test_rrcg_diff_latest_005.csv};
    \addlegendentry{RR-CG $\lambda = 0.05$}

    \addplot[
        mark=*,
        color=rust,
        thick, % Line thickness
    ] table[x=iters, y=variance, col sep=comma] {csv_for_plot/test_rrcg_diff_latest_002.csv};
    \addlegendentry{RR-CG $\lambda = 0.02$}

    \end{axis}
\end{tikzpicture}
    \caption[Multiple Linear Systems]{Multiple Linear Systems: the variance with respect to the average number of CG iterations.}
    \label{fig:diff}
\end{figure}

\subsection{GP Hyperparameter Training}
We conduct Gaussian Process (GP) hyperparameter training on the PoleTele dataset from the UCI Machine Learning repository \cite{Asuncion:2007}. We employ the RBF kernel characterized by hyperparameters $\theta$ including outputscale ($\gamma$), lengthscale ($l$), and noise variance ($\sigma^2$):
\begin{equation}
    k(x_i, x_j) = \gamma \cdot \exp\left(-\frac{1}{2l^2} \|x_i - x_j\|^2\right) + \sigma^2 \delta_{i, j}.
\end{equation}
The goal is to find the optimized hyperparameters $\theta^{*}$ by minimizing the loss function
\begin{equation}
\mathcal{L}(\theta) = 
\frac{1}{2}{y}^T{K}^{-1}{y}+\frac{1}{2}\log \det ({K})
+\frac{N}{2}\log 2\pi,
\end{equation}
where $N$ is the number of data points and  ${K}\in\mathbb{R}^{N\times N}$ is the kernel matrix. We calculate the gradient
\begin{equation} \label{eq: gradient}
        \frac{\partial \mathcal{L}}{\partial \theta} = \frac{1}{2}\left(\mathrm{tr}\left({K}^{-1}\frac{\partial {K}}{\partial \theta}\right) - {y}^T {K}^{-1}\frac{\partial {K}}{\partial \theta}{K}^{-1}{y}\right).
\end{equation}
To approximate the trace term $\mathrm{tr}({K}^{-1}\frac{\partial {K}}{\partial \theta})$ in \eqref{eq: gradient}, we utilize stochastic trace estimation \cite{gardner2018gpytorch, cutajar2016preconditioning} to change the problem into the systems of equations:
\begin{equation}
    \mathrm{tr}\left({K}^{-1}\frac{\partial {K}}{\partial \theta}\right) = \mathbb{E}\left[{z_i}^T{K}^{-1}\frac{\partial {K}}{\partial \theta}{z_i}\right] \approx \frac{1}{t}\sum_{i=1}^{t}\left[{z_i}^T{K}^{-1}\frac{\partial {K}}{\partial \theta}{z_i}\right],
\end{equation}
where $z_1, \ldots, z_t$ are random probe vectors with $\mathbb{E}\left[z_i\right] = 0$ and $\mathbb{E}\left[z_i{z_i}^T\right] = I$.

We set $t = 30$ with Rademacher random variables to generate probe vectors. We perform $100$ GP optimization iterations, using the Adam optimizer with learning rate $\alpha = 0.01$ and a scheduler that decreases the learning rate by $70\%$ in $55$ and $90$ iterations.  We adjust the parameters for  AS-CG and RR-CG to target an average of $35$ iterations, consistent with the deterministic CG. For RR-CG , we choose the temperature parameter $\lambda = 0.05$ and $\lambda = 0.10$ as suggested in \cite{potapczynski2021bias}. For both deterministic and stochastic methods, we use the deterministic solution from the previous training step as a warm-start initial guess and run five trials for a fixed set of parameters. We also implement GP optimization using Cholesky factorization as the benchmark.
% \begin{figure}
%          \centering
%          \input{tikz/pol-gp-rrcg-005}
%          \caption{The GP optimization objective for models trained with RR-CG: $\lambda = 0.05$, $J_{min} = 20$}
%          \label{RRCG-005}
% \end{figure}

% \begin{figure}
%          \centering
%          \input{tikz/pol-gp-rrcg-01}
%          \caption{The GP optimization objective for models trained with RR-CG: $\lambda = 0.10$, $J_{min} = 30$}
%          \label{RRCG-010}
% \end{figure}

% \begin{figure}
%          \centering
%          \input{tikz/pol-gp-ascg}
%          \caption{The GP optimization objective for models trained with AS-CG: $J_{min} = 10$}
%          \label{ASCG}
% \end{figure}

% \begin{figure}
%          \centering
%          \input{tikz/pol-gp-chol}
%          \caption{The GP optimization objective for models trained with Cholesky}
%          \label{Chol}
% \end{figure}

\begin{figure}
    \centering
    \begin{tikzpicture}
    % Define the group plot with the shared legend
    \begin{groupplot}[
        group style={
            group size=2 by 1,
            horizontal sep=1.8cm,
            vertical sep=1.8cm,
        },
        width=6.4cm, height=6.4cm,
        xlabel={Number of Optimization Steps},
        ylabel={Negative Marginal Log Likelihood},
        grid=major,
        grid style={dashed, gray!30},
        ymin=-0.4, ymax=-0, % Set y-axis limits
        ytick={0, -0.1, -0.2, -0.3, -0.4}, % Define specific y-ticks
        legend style={
            legend columns=5, % Set number of columns for horizontal display
            at={(1.15, 1.15)}, % Place above the plots
            anchor=south, % Anchor to the bottom of the legend box
            draw=none, % Remove the frame box
            fill=none, % No background fill for the legend
            font=\large, % Adjust the font size as needed
        },
    ]

    % First plot
    \nextgroupplot[
        title={\textbf{AS-CG}}
    ]
    \addplot[
        mark=-,
        color=orange,
        thick,
    ] table[x=iter, y=Loss, col sep=comma, restrict expr to domain={\thisrow{iter}}{55:100000}] {csv_for_plot/new-pol-result/1-acg.csv};
    \addlegendentry{Trial 1}
    \addplot[
        mark=-,
        color=joshua,
        thick,
    ] table[x=iter, y=Loss, col sep=comma, restrict expr to domain={\thisrow{iter}}{55:100000}] {csv_for_plot/new-pol-result/2-acg.csv};
    \addlegendentry{Trial 2}
    \addplot[
        mark=-,
        color=silver,
        thick,
    ] table[x=iter, y=Loss, col sep=comma, restrict expr to domain={\thisrow{iter}}{55:100000}] {csv_for_plot/new-pol-result/3-acg.csv};
    \addlegendentry{Trial 3}
    \addplot[
        mark=-,
        color=steelblue,
        thick,
    ] table[x=iter, y=Loss, col sep=comma, restrict expr to domain={\thisrow{iter}}{55:100000}] {csv_for_plot/new-pol-result/4-acg.csv};
    \addlegendentry{Trial 4}
    \addplot[
        mark=-,
        color=seagreen,
        thick,
    ] table[x=iter, y=Loss, col sep=comma, restrict expr to domain={\thisrow{iter}}{55:100000}] {csv_for_plot/new-pol-result/5-acg.csv};
    \addlegendentry{Trial 5}

    % Second plot
    \nextgroupplot[
        title={\textbf{RR-CG $\lambda$ = 0.05}}
    ]
    \addplot[
        mark=-,
        color=orange,
        thick,
        forget plot,
    ] table[x=iter, y=Loss, col sep=comma, restrict expr to domain={\thisrow{iter}}{55:100000}] {csv_for_plot/new-pol-result/1-rrcg_005.csv};
    \addplot[
        mark=-,
        color=joshua,
        thick,
        forget plot,
    ] table[x=iter, y=Loss, col sep=comma, restrict expr to domain={\thisrow{iter}}{55:100000}] {csv_for_plot/new-pol-result/2-rrcg_005.csv};
    \addplot[
        mark=-,
        color=silver,
        thick,
        forget plot,
    ] table[x=iter, y=Loss, col sep=comma, restrict expr to domain={\thisrow{iter}}{55:100000}] {csv_for_plot/new-pol-result/3-rrcg_005.csv};
    \addplot[
        mark=-,
        color=steelblue,
        thick,
        forget plot,
    ] table[x=iter, y=Loss, col sep=comma, restrict expr to domain={\thisrow{iter}}{55:100000}] {csv_for_plot/new-pol-result/4-rrcg_005.csv};
    \addplot[
        mark=-,
        color=seagreen,
        thick,
        forget plot,
    ] table[x=iter, y=Loss, col sep=comma, restrict expr to domain={\thisrow{iter}}{55:100000}] {csv_for_plot/new-pol-result/5-rrcg_005.csv};
    \end{groupplot}
    
\end{tikzpicture} 
    \caption{The GP optimization objective for models trained with AS-CG and RR-CG ($\lambda = 0.05$)}
    \label{fig:GP-random}
\end{figure}
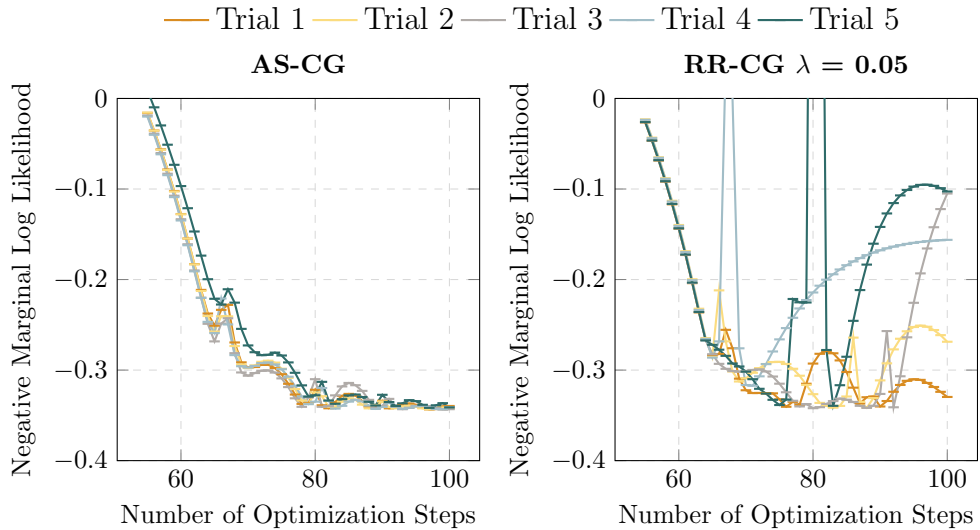

\begin{figure}
    \centering
    \begin{tikzpicture}
    % Define the group plot with the shared legend
    \begin{groupplot}[
        group style={
            group size=2 by 1,
            horizontal sep=1.8cm,
            vertical sep=1.8cm,
        },
        width=6.4cm, height=6.4cm,
        xlabel={Number of Optimization Steps},
        ylabel={Negative Marginal Log Likelihood},
        grid=major,
        grid style={dashed, gray!30},
        % ymin=-0.4, ymax=-0.1, % Set y-axis limits
        % ytick={-0.1, -0.2, -0.3, -0.4}, % Define specific y-ticks
        legend style={
            legend columns=3, % Set number of columns for horizontal display
            at={(1.17, 1.15)}, % Place above the plots
            anchor=south, % Anchor to the bottom of the legend box
            draw=none, % Remove the frame box
            fill=none, % No background fill for the legend
            font=\large, % Adjust the font size as needed
        },
    ]

    % First plot
    \nextgroupplot[
        ymin=-0.4, ymax=-0.0, % Set y-axis limits for the first plot
        ytick={0.0, -0.1, -0.2, -0.3, -0.4}, % Define specific y-ticks
        title={}
    ]
    \addplot[
        mark=-,
        color=steelblue,
        thick,
    ] table[x=iter, y=Loss, col sep=comma, restrict expr to domain={\thisrow{iter}}{55:100000}] {csv_for_plot/new-pol-result/3-acg.csv};
    \addlegendentry{AS-CG}
    \addplot[
        mark=-,
        color=darksky,
        thick,
    ] table[x=iter, y=Loss, col sep=comma, restrict expr to domain={\thisrow{iter}}{55:100000}] {csv_for_plot/new-pol-result/3-cg.csv};
    \addlegendentry{CG}
    \addplot[
        mark=-,
        color=orange,
        thick,
    ] table[x=iter, y=Loss, col sep=comma, restrict expr to domain={\thisrow{iter}}{55:100000}] {csv_for_plot/new-pol-result/chol.csv};
    \addlegendentry{Cholesky}

    % Second plot
    \nextgroupplot[
        title={}
    ]
    \addplot[
        mark=-,
        color=steelblue,
        thick,
        forget plot,
    ] table[x=iter, y=Loss, col sep=comma, restrict expr to domain={\thisrow{iter}}{65:100000}] {csv_for_plot/new-pol-result/3-acg.csv};
    \addplot[
        mark=-,
        color=darksky,
        thick,
        forget plot,
    ] table[x=iter, y=Loss, col sep=comma, restrict expr to domain={\thisrow{iter}}{65:100000}] {csv_for_plot/new-pol-result/3-cg.csv};
    \addplot[
        mark=-,
        color=orange,
        thick,
        forget plot,
    ] table[x=iter, y=Loss, col sep=comma, restrict expr to domain={\thisrow{iter}}{65:100000}] {csv_for_plot/new-pol-result/chol.csv};
    \end{groupplot}
    
\end{tikzpicture} 
    \caption{The GP optimization objective for models trained with AS-CG, the deterministic CG, and Cholesky. We pick $1$ trial each from AS-CG and the deterministic CG since they produce very similar training results within their own trials.}
    \label{fig:GP-deter}
\end{figure}
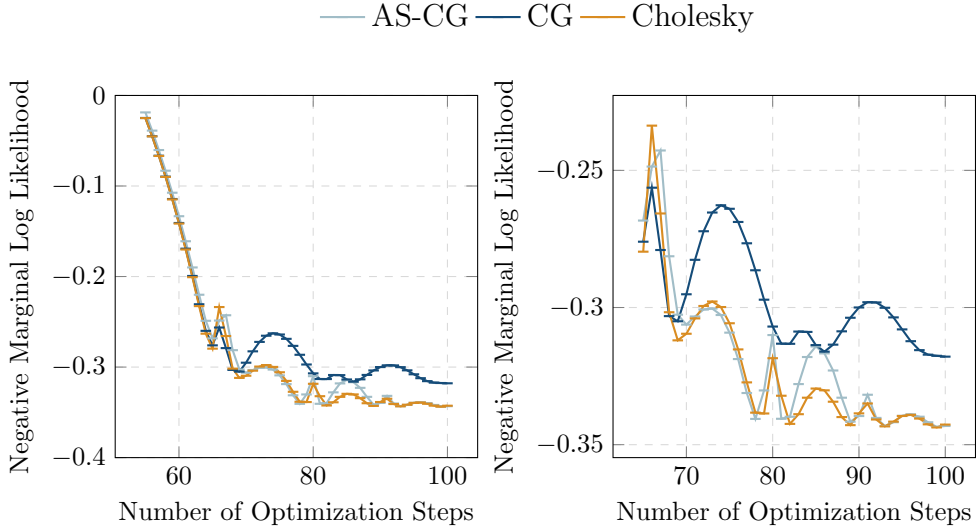

Since the two chosen $\lambda$ values of RR-CG produce very similar training results, we only display the result for $\lambda = 0.05$ for comparison with that of AS-CG in \cref{fig:GP-random}. We notice that the training process of RR-CG has a sharper fluctuation compared with that of AS-CG, which suggests that AS-CG returns the solution with lower variance and thus becomes more stable in GP optimization. RR-CG maintains the same average number of conjugate gradients throughout the training process, while AS-CG reduces computational cost in the early stages and increases the inner loop iterations later to keep the optimization objective stable. However, in terms of the lowest loss, we do not see an obvious difference between AS-CG and RR-CG methods.

We also compare the training performance of AS-CG with deterministic CG and Cholesky methods, as shown in \cref{fig:GP-deter}. Due to the bias introduced by early truncation, the deterministic CG fails to converge to the optimal negative marginal log likelihood. AS-CG and the benchmark Cholesky methods demonstrate comparable training trend and achieve the optimal loss.

\subsection{Competitive Physics Informed Neural Networks} Physics Informed Neural Network (PINNs) is a PDE solver that uses the square of the PDE residual as the loss function. Competitive Physics Informed Neural Networks (CPINNs) \cite{zeng2022competitive} formulates a minimax game between the PINN and a discriminator network. 
It rewards the discriminator and penalizes the PINN if the former correctly predicts mistakes of the latter. Consider a discretized PDE form 
\begin{equation} \label{eq: discrePDE}
    A \pi = f.
\end{equation}
CPINNS turns \eqref{eq: discrePDE} into the minimax problem:
\begin{equation} \label{eq: minimax}
    \min_{\pi} \max_{\delta} \delta^\top\left(A \pi - f\right),
\end{equation}
where $\pi$ and $\delta$ are the weight vectors that the output of the PINN and the discriminator network linearly depend on.
The solution of \eqref{eq: minimax} amounts to solving
\begin{equation} \label{eq: linearsysCPINNs}
\begin{bmatrix} 
    0 & A^\top \\
    A & 0
\end{bmatrix}
\begin{bmatrix}
\pi \\ \delta
\end{bmatrix} =
\begin{bmatrix}
0 \\ f
\end{bmatrix}.
\end{equation}

In this experiment, we train CPINNs by the GMRES-based adaptive competitive gradient descent (GACGD) \cite{cgds-package}, which provides the gradient update with an RMSProp-type heuristic to adjust learning rates. We apply our AS estimator and the exponential decay RR estimator to GMRES, named AS-GACGD and RR-GACGD respectively, and compare their performance with the deterministic algorithm on the Poisson equation and Allen-Cahn equation.

\subsubsection{Poisson Equation}
We first consider a $2$-d Poisson problem
\begin{equation}
    \Delta u(x, y) = -2 \sin(x) \cos(y), \quad x,y \in [-2,2],
    \label{e:poisson}
\end{equation}
with Dirichlet boundary conditions
\begin{equation}
    \begin{aligned}
        u(x,-2)            &= \sin(x)\cos(-2),            \qquad  u(-2,y) = \sin(-2)\cos(y), \\
    u(x,\phantom{-}2)  &= \sin(x)\cos(\phantom{-}2),  \qquad  u(\phantom{-}2,y)  = \sin(\phantom{-}2)\cos(y).
    \end{aligned}
\end{equation}
We adopt the experimental setup described in \cite{zeng2022competitive}. We use $5000$ training points within the domain $[-2, 2]\times[-2,2]$, with $50$ training points on each side of the domain boundary. The PINN model has $3$ hidden layers, and the discriminator model contains $4$ hidden layers. Both models have $50$ neurons in each layer. The GACGD optimizer uses a learning rate of $10^{-3}$, beta values $\beta_1 = 0.99$ and $\beta_2 = 0.99$, and $\epsilon = 10^{-8}$. We adjust the parameters for AS-GACGD and RR-GACGD to maintain an average of $200$ GMRES iterations per training step, consistent with the deterministic GACGD. For all methods, we use the deterministic GMRES solution from the previous training step as a warm-start initial guess for the subsequent step. 

\cref{fig:poisson} shows the average performance of these three algorithms with the uncertainty region. RR-GACGD, due to its high variance, begins to diverge starting from $10^{-6}$. AS-GACGD performs comparably with the deterministic GACGD, which reduces the error to nearly the $10^{-8}$ level.

\begin{figure}
    \centering
    \begin{tikzpicture}
    % \begin{axis}[
    %     width=12cm,
    %     height=8cm,
    %     xlabel={Iterative Method Steps},
    %     ylabel={$L_2$ Relative Error},
    %     xlabel style={font=\itshape},
    %     ylabel style={font=\itshape},
    %     ymode=log,
    %     xmode=log,
    %     legend pos=north east,
    %     legend cell align={left},
    %     grid=both
    ]
    \begin{axis}[
        width=10cm, height=6cm, % Adjust size
        xlabel={Number of GMRES Iterations},
        ylabel={$L_2$ Relative Error},
        ymode=log, % Set y-axis to log scale
        xmode=log,
        grid=major,
        grid style={dashed, gray!30},
        legend style={at={(0.05,0.05)},anchor=south west},
    ]
    
    %Load data for AS-GACGD
    % \pgfplotstableread[col sep=comma]{csv_for_plot/poisson-result/ascg_result.csv}\datatable

    % Plot the mean line for AS-GACGD
    \addplot [
        color=steelblue,
        thick,
        mark=none,
        legend image post style={sharp plot, color=steelblue}
    ] table[x=iter_num_sum, y=L2 Error Mean, col sep=comma, restrict expr to domain={\thisrow{iter_num_sum}}{1000:1000000000}
    ]{csv_for_plot/poisson-result/ascg_result.csv};
    \addlegendentry{AS-GACGD}

    % Plot the uncertainty region for AS-GACGD
    \addplot [
        name path=upper,
        draw=none,
        forget plot
    ] table [
        x=iter_num_sum,
        y expr=\thisrow{L2 Error Mean} + \thisrow{L2 Error Std},
        col sep=comma, restrict expr to domain= {\thisrow{iter_num_sum}}{1000:1000000000}]{csv_for_plot/poisson-result/ascg_result.csv};

    \addplot [
        name path=lower,
        draw=none,
        forget plot
    ] table [
        x=iter_num_sum,
        y expr=\thisrow{L2 Error Mean} - \thisrow{L2 Error Std},
        col sep=comma, restrict expr to domain= {\thisrow{iter_num_sum}}{1000:1000000000}]{csv_for_plot/poisson-result/ascg_result.csv};

    \addplot[
        fill=steelblue,
        opacity=0.2,
        forget plot
    ] fill between [
        of=upper and lower
    ];

    % Load data for RR-GACGD
    \pgfplotstableread[col sep=comma]{csv_for_plot/poisson-result/rrcg_result.csv}\datatable

    % Plot the mean line for RR-GACGD
    \addplot [
        color=rust,
        thick,
        mark=none,
        legend image post style={sharp plot, color=rust}
    ] table [x=iter_num_sum, y=L2 Error Mean, col sep=comma, restrict expr to domain={\thisrow{iter_num_sum}}{1000:1000000000}
    ]{csv_for_plot/poisson-result/rrcg_result.csv};
    \addlegendentry{RR-GACGD}

    % Plot the uncertainty region for RR-GACGD
    \addplot [
        name path=upper2,
        draw=none,
        forget plot
    ] table [
        x=iter_num_sum,
        y expr=\thisrow{L2 Error Mean} + \thisrow{L2 Error Std}, col sep=comma, restrict expr to domain={\thisrow{iter_num_sum}}{1000:1000000000}
    ]{csv_for_plot/poisson-result/rrcg_result.csv};

    \addplot [
        name path=lower2,
        draw=none,
        forget plot
    ] table [
        x=iter_num_sum,
        y expr=\thisrow{L2 Error Mean} - \thisrow{L2 Error Std}, col sep=comma, restrict expr to domain={\thisrow{iter_num_sum}}{1000:1000000000}
    ]{csv_for_plot/poisson-result/rrcg_result.csv};

    \addplot [
        fill=rust,
        opacity=0.2,
        forget plot
    ] fill between [
        of=upper2 and lower2
    ];

    % % Load data for GACGD
    % \pgfplotstableread[col sep=comma]{csv_for_plot/average_with_uncertainty_17_18_19_cg.csv}\datatable

    % % Plot the mean line for GACGD
    % \addplot [
    %     color=green,
    %     thick,
    %     mark=none,
    %     legend image post style={sharp plot, color=green}
    % ] table [x=iter_num_sum, y=L2 Error Mean] {\datatable};
    % \addlegendentry{GACGD}
    % \pgfplotstableread[col sep=comma]{csv_for_plot/poisson-result/cg_result.csv}\datatable

    \addplot [
        color=joshua,
        thick,
        mark=none,
        legend image post style={sharp plot, color=joshua}
    ] table [x=iter_num_sum, y=L2 error, col sep=comma, restrict expr to domain={\thisrow{iter_num_sum}}{1000:1000000000}
    ]{csv_for_plot/poisson-result/cg_result.csv};
    \addlegendentry{GACGD}

    % % Plot the uncertainty region for GACGD
    % \addplot [
    %     name path=upper3,
    %     draw=none,
    %     forget plot
    % ] table [
    %     x=iter_num_sum,
    %     y expr=\thisrow{L2 Error Mean} + \thisrow{L2 Error Std}
    % ] {\datatable};

    % \addplot [
    %     name path=lower3,
    %     draw=none,
    %     forget plot
    % ] table [
    %     x=iter_num_sum,
    %     y expr=\thisrow{L2 Error Mean} - \thisrow{L2 Error Std}
    % ] {\datatable};

    % \addplot [
    %     fill=green,
    %     opacity=0.2,
    %     forget plot
    % ] fill between [
    %     of=upper3 and lower3
    % ];
    
    \end{axis}
\end{tikzpicture}
    \caption[Poisson Problem]{Comparison of CPINNs with deterministic GMRES and two randomized estimators for the Poisson equations}
    \label{fig:poisson}
\end{figure}
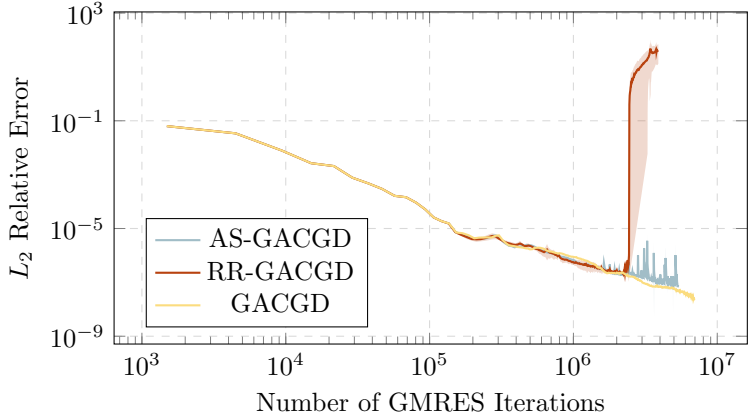

\subsubsection{Allen-Cahn Equation} We next consider an $1$-d Allen-Cahn problem
\begin{equation} 
    u_t - 0.0001u_{xx} +  5 u^3 - 5u = 0, \quad x\in [-1, 1], \quad t\in [0, 1], 
    \label{eq:ac} 
\end{equation}
where $u(t,x)$ is the solution of the PDE and
\begin{equation}
    u(0,x) = x^2\cos(\pi x), \quad u(t, -1) = u(t, 1), \quad u_x(t, -1) = u_x(t, 1)
    \label{eq:acbc}
\end{equation}
are the initial and boundary conditions. 

The training and testing data are from \cite{raissi2019physics}. We adopt the experimental setup described in \cite{zeng2022competitive}: we randomly sample $10000$ training points within the domain for the constraint in \eqref{eq:ac}, with $100$ and $256$ data points for the initial and boundary conditions in \eqref{eq:acbc}. The PINN model has $4$ hidden layers with $128$ neurons in each layer, and the discriminator model contains $4$ hidden layers with $256$ neurons per layer. The GACGD optimizer uses a learning rate of $10^{-3}$, beta values $\beta_1 = 0.99$ and $\beta_2 = 0.99$, and $\epsilon = 10^{-8}$. We adjust the parameters for AS-GACGD and RR-GACGD to ensure that they both have an average of $220$ GMRES iterations per training step, consistent with the deterministic GACGD. Align with the Poisson problem settings, we use the deterministic GMRES solution as a warm-start initial guess. 

\cref{fig:ac} shows the average performance of these three algorithms with their uncertainty regions. Due to the increased difficulty of the Allen-Cahn problem compared to the Possion problem, RR-GACGD starts to diverge around $10^{-1}$. Both AS-GACGD and the deterministic GAGCD achieves lower accuracy of approximately $10^{-2}$. AS-GACGD demonstrates improvement that achieving acurracy below $10^{-3}$ in some trials.

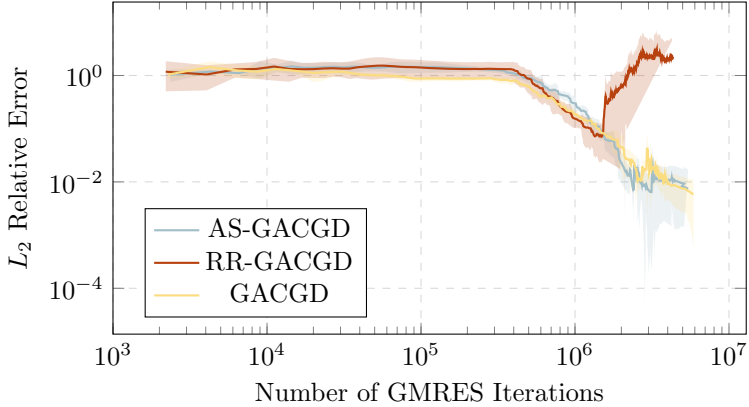
\begin{figure}
    \centering
    \begin{tikzpicture}
    \begin{axis}[
        width=10cm, height=6cm, % Adjust size
        xlabel={Number of GMRES Iterations},
        ylabel={$L_2$ Relative Error},
        ymode=log, % Set y-axis to log scale
        xmode=log,
        grid=major,
        grid style={dashed, gray!30},
        legend style={at={(0.05,0.05)},anchor=south west},
    ]
    
    % Plot the mean line for AS-GACGD
    \addplot [
        color=steelblue,
        thick,
        mark=none,
        legend image post style={sharp plot, color=steelblue}
    ] table [x=iter_num_sum, y=L2 Error Mean, col sep=comma, restrict expr to domain={\thisrow{iter_num_sum}}{1000:1000000000}
    ] {csv_for_plot/ac-result/average_with_uncertainty_17_18_19_ascg.csv};
    \addlegendentry{AS-GACGD}

    % Plot the uncertainty region for AS-GACGD
    \addplot [
        name path=upper,
        draw=none,
        forget plot
    ] table [
        x=iter_num_sum,
        y expr=\thisrow{L2 Error Mean} + \thisrow{L2 Error Std}, col sep=comma, restrict expr to domain={\thisrow{iter_num_sum}}{1000:1000000000}
    ] {csv_for_plot/ac-result/average_with_uncertainty_17_18_19_ascg.csv};

    \addplot [
        name path=lower,
        draw=none,
        forget plot
    ] table [
        x=iter_num_sum,
        y expr=\thisrow{L2 Error Mean} - \thisrow{L2 Error Std}, col sep=comma, restrict expr to domain={\thisrow{iter_num_sum}}{1000:1000000000}
    ] {csv_for_plot/ac-result/average_with_uncertainty_17_18_19_ascg.csv};

    \addplot[
        fill=steelblue,
        opacity=0.2,
        forget plot
    ] fill between [
        of=upper and lower
    ];

    % Plot the mean line for RR-GACGD
    \addplot [
        color=rust,
        thick,
        mark=none,
        legend image post style={sharp plot, color=rust}
    ] table [x=iter_num_sum, y=L2 Error Mean, col sep=comma, restrict expr to domain={\thisrow{iter_num_sum}}{1000:1000000000}
    ] {csv_for_plot/ac-result/average_with_uncertainty_17_18_19_rrcg.csv};
    \addlegendentry{RR-GACGD}

    % Plot the uncertainty region for RR-GACGD
    \addplot [
        name path=upper2,
        draw=none,
        forget plot
    ] table [
        x=iter_num_sum,
        y expr=\thisrow{L2 Error Mean} + \thisrow{L2 Error Std}, col sep=comma, restrict expr to domain={\thisrow{iter_num_sum}}{1000:1000000000}
    ] {csv_for_plot/ac-result/average_with_uncertainty_17_18_19_rrcg.csv};

    \addplot [
        name path=lower2,
        draw=none,
        forget plot
    ] table [
        x=iter_num_sum,
        y expr=\thisrow{L2 Error Mean} - \thisrow{L2 Error Std}, col sep=comma, restrict expr to domain={\thisrow{iter_num_sum}}{1000:1000000000}
    ] {csv_for_plot/ac-result/average_with_uncertainty_17_18_19_rrcg.csv};

    \addplot [
        fill=rust,
        opacity=0.2,
        forget plot
    ] fill between [
        of=upper2 and lower2
    ];

    % Plot the mean line for GACGD
    \addplot [
        color=joshua,
        thick,
        mark=none,
        legend image post style={sharp plot, color=joshua}
    ] table [x=iter_num_sum, y=L2 Error Mean, col sep=comma, restrict expr to domain={\thisrow{iter_num_sum}}{1000:1000000000}
    ] {csv_for_plot/ac-result/average_with_uncertainty_17_18_19_cg.csv};
    \addlegendentry{GACGD}

    % Plot the uncertainty region for GACGD
    \addplot [
        name path=upper3,
        draw=none,
        forget plot
    ] table [
        x=iter_num_sum,
        y expr=\thisrow{L2 Error Mean} + \thisrow{L2 Error Std}, col sep=comma, restrict expr to domain={\thisrow{iter_num_sum}}{1000:1000000000}
    ] {csv_for_plot/ac-result/average_with_uncertainty_17_18_19_cg.csv};

    \addplot [
        name path=lower3,
        draw=none,
        forget plot
    ] table [
        x=iter_num_sum,
        y expr=\thisrow{L2 Error Mean} - \thisrow{L2 Error Std}, col sep=comma, restrict expr to domain={\thisrow{iter_num_sum}}{1000:1000000000}
    ] {csv_for_plot/ac-result/average_with_uncertainty_17_18_19_cg.csv};

    \addplot [
        fill=joshua,
        opacity=0.2,
        forget plot
    ] fill between [
        of=upper3 and lower3
    ];
    
    \end{axis}
\end{tikzpicture}
    \caption[AC Problem]{Comparison of CPINNs with deterministic GMRES and two randomized estimators for the Allen-Cahn equations.}
    \label{fig:ac}
\end{figure}

\section{Conclusions}
We introduce a randomized truncation estimator named the Adaptively Subsampled estimator that fulfills the deficiency of theoretical optimality for CG that satisfies a diminishing returns property. We can surprisingly compute the closed-form truncation probabilities on-the-fly, ensuring its adaptivity for numerical implementation. When the diminishing returns property fails, we prove that obtaining the optimal on-the-fly truncation distribution is impossible, while our generalized AS estimator still provides unbiased, albeit suboptimal, solution. We show how to extend the AS estimator to minimal residual iterative methods for a broader range of application.
 
\section*{Acknowledgments}
We thank Stephen Huan for valuable comments.
This research was supported in part through research cyberinfrastructure resources and services provided by the Partnership for an Advanced Computing Environment (PACE) at the Georgia Institute of Technology, Atlanta, Georgia, USA.
Both authors gratefully acknowledge support from the Office of Naval Research under award number N00014-23-1-2545 (Untangling Computation).

\bibliographystyle{siamplain}
\bibliography{references}
\end{document}

% --- supplement: ex_supplement.tex ---

\maketitle

\section{A detailed example}

Here we include some equations and theorem-like environments to show
how these are labeled in a supplement and can be referenced from the
main text.
Consider the following equation:
\begin{equation}
  \label{eq:suppa}
  a^2 + b^2 = c^2.
\end{equation}
You can also reference equations such as \cref{eq:matrices,eq:bb} 
from the main article in this supplement.

\lipsum[100-101]

\begin{theorem}
  An example theorem.
\end{theorem}

\lipsum[102]
 
\begin{lemma}
  An example lemma.
\end{lemma}

\lipsum[103-105]

Here is an example citation: \cite{KoMa14}.

\section[Proof of Thm]{Proof of \cref{thm:bigthm}}
\label{sec:proof}
\lipsum[106-112]

\section{Additional experimental results}
\Cref{tab:foo} shows additional
supporting evidence. 

\begin{table}[htbp]
{\footnotesize
  \caption{Example table}  \label{tab:foo}
\begin{center}
  \begin{tabular}{|c|c|c|} \hline
   Species & \bf Mean & \bf Std.~Dev. \\ \hline
    1 & 3.4 & 1.2 \\
    2 & 5.4 & 0.6 \\ \hline
  \end{tabular}
\end{center}
}
\end{table}

\bibliographystyle{siamplain}
\bibliography{references}